\newtheorem{theorem}{Theorem}[section]
\newtheorem{lemma}[theorem]{Lemma}
\newtheorem{claim}[theorem]{Claim}
\newtheorem{proposition}[theorem]{Proposition}
\newcommand{\subscript}[2]{$#1 _ #2$}
\theoremstyle{definition}
\newtheorem{definition}[theorem]{Definition}
\newcommand{\dist}{\mathrm{dist}}
\newcommand{\e}{\varepsilon}
\newcommand{\bD}{\mathbb{D}}
\newcommand{\bN}{\mathbb{N}}
\newcommand{\bC}{\mathbb{C}}
\newcommand{\bR}{\mathbb{R}}
\newcommand{\bdry}{\partial \bD}
\newcommand{\puncD}{\bD^{*}}
\newcommand{\cale}{\mathcal{E}_\e}
\newcommand{\calk}{\mathcal{K}}
\newcommand{\call}{\mathcal{L}}
\newcommand{\scre}{\mathscr{E}}
\newcommand{\scrj}{\mathscr{J}}
\newcommand{\ma}{\mathbf{a}}
\newcommand{\wma}{\widehat{\mathbf{a}}}
\newcommand{\wmb}{\widehat{\mathbf{b}}}
\newcommand{\wmy}{\widehat{\mathbf{y}}}
\newcommand{\wmp}{\widehat{\mathbf{p}}}
\newcommand{\wmq}{\widehat{\mathbf{q}}}
\newcommand{\calh}{\mathcal{H}}
\newtheorem{remark}[theorem]{Remark}
\numberwithin{equation}{section}
\newcommand{\bZ}{\mathbb{Z}}
\begin{document}
\title{Periodic Orbits of Gross Pitaevskii in the Disc with Vortices Following Point Vortex Flow}
\author{Raghavendra Venkatraman\footnote{email: rvenkatr@umail.iu.edu} \\ \small Indiana University, Bloomington.}

\maketitle

\begin{abstract}
We prove the existence of non-constant time periodic vortex solutions to the Gross-Pitaevskii equations for small but \textit{fixed} $\e > 0.$ The vortices of these solutions follow periodic orbits to the point vortex system of ordinary differential equations \textit{for all time}. The construction uses two approaches-- constrained minimization techniques adapted from \cite{GS} and topological minimax techniques adapted from \cite{LinMinMax}, applied to a formulation of the problem within a rotational ansatz. 
\end{abstract}
\section{Introduction} \label{sec:intro}
We study non-constant time periodic vortex solutions to the Gross-Pitaevskii (GP) equations 
\begin{align}
\label{eq:GP} iu_t (x,t) &= \Delta u (x,t) + \frac{u(x,t)(1-|u(x,t)|^2)}{\e^2}, \hspace{1cm} &(x,t) \in \bD \times \bR^+, 
\end{align}
subject to Dirichlet boundary conditions
\begin{align}
\label{eq:bc}
u(e^{i \theta},t) &= g_n(\theta) := e^{in \theta}, \hspace{6.4cm}  \theta \in [0,2\pi), t \in \bR^+. 
\end{align}
Here $\bD \subset \bC \cong \bR^2$ is the unit disc, $n$ is a non-negative integer and $u:\bD \times \bR \to \bC.$ We are interested in studying solutions to this system for small but fixed $\e > 0.$ 
\par 
The GP equations arise in a number of areas of physics such as superfluids, Bose-Einstein condensation and non-linear optics; see for instance \cite{bosecondensate}. The time-periodic solutions to GP that we construct are rotational, in the sense of an ansatz that we explain below in \eqref{ansatz}. Roughly speaking, the vortices of our time periodic solutions rotate at a uniform angular velocity about the origin of the unit disc. The vortex configurations from the celebrated rotating bucket experiment, cf. \cite{stirvortex}, bear resemblance to those explored in this paper. However the emergence of vortices in experiments is achieved through an externally driven rotation of the domain, whereas here the presence of vortices is topologically enforced through the Dirichlet boundary condition. 
\par
The GP equations are a Hamiltonian system. Consequently, the flow defined by solutions to \eqref{eq:GP} conserves the Ginzburg-Landau energy 
\begin{align} \label{gl}
E_\e(u) := \frac{1}{2}\int_{\bD} |\nabla u|^2 + \frac{1}{2\e^2}(1-|u|^2)^2\,dx. 
\end{align}
The $\e \to 0$ asymptotics of global minimizers $u_\e$ of \eqref{gl} under a Dirichlet boundary condition was the subject of the seminal work \cite{BBH}. This work identifies ``vortices" emerging in the $\e \to 0$ limit whose locations are determined through minimization of a finite dimensional renormalized energy denoted $W.$ The renormalized energy is obtained by subtracting off the logarithmically divergent part of $E_\e(u_\e).$ 
Akin to the Gross-Pitaevskii system being the $L^2-$Hamiltonian flow associated to the Ginzburg Landau energy $E_\e,$ the Hamiltonian flow in $\mathbb{C}^N$ associated to the renormalized energy $W$ is referred to as the point vortex system of ODE's, cf. \eqref{pvfode}. This system of ODE's also arises in fluid mechanics, and corresponds to the incompressible Euler equations with vorticity given by a sum of Dirac deltas and constant tangential velocity boundary conditions; see \cite[Chapter 4]{marchioropulvirenti} for a precise statement. 
\par
Rigorous asymptotic analysis for the GP equations posed on $\bR^2$ and $\mathbb{T}^2$ in the $\e \to 0$ limit and its connection to the point vortex system was initiated in \cite{colljerrard,colljerrard2,lin1999incompressible}. A crucial measure theoretic tool in these and other studies of Ginzburg-Landau is the Jacobian $Ju$ of a function $u,$ which measures concentration of derivatives of the phase of the function $u$, cf. \cite{JerrardSoner}. Starting with a sequence of energetically well-prepared initial data whose Jacobian measures are close to a sum of Dirac masses $(a_i^0)_{i=1}^N$ and denoting the resulting point vortex flow by $(a_i(t))_{t \in [0,T)},$ it is shown in \cite{colljerrard,colljerrard2,lin1999incompressible} that along a subsequence $\{\e_j\} \to 0$, at each time $t \in [0,T)$ the Jacobians of the corresponding solutions to GP converge to a sum of Dirac masses supported on $(a_i(t)).$ Here $T$ is the first time of collision of any two vortices for the point vortex flow. It is independent of $\e,$ and is possibly infinite. 
\par
More recently, the authors in \cite{jerrardspirn} proved refined estimates relating GP dynamics to those of point vortices, focusing on estimates that hold for possibly small but \textit{fixed} $\e > 0$. These estimates yield in particular that in the presence of a bounded number of vortices and given sufficiently small $\e$, the Jacobians of solutions $u_\e(\cdot, t)$ to GP are \textit{quantitatively} in $\e$ close to point vortex dynamics $(a_i(t)),$ the estimates holding on time intervals of order $0 \leq t \lesssim |\ln \e|$. As a corollary, if $(a_i(t))$ represents a periodic orbit to point vortex dynamics, the results of \cite{jerrardspirn} show that for fixed small $\e > 0,$ the Jacobians of $u_\e(\cdot,t)$ are \textit{close} to periodic functions for $0 \leq t \lesssim |\ln \e|.$ 
\par 
In light of the foregoing discussions, it is thus natural to ask for existence of \textit{periodic orbits} to GP for small but \textit{fixed} $\e > 0$ that follow point vortex dynamics \textit{for all time.} Addressing these issues in the symmetric situation of \eqref{eq:GP}-\eqref{eq:bc} is the main contribution of our paper. Our main results are contained in Theorems \ref{gpsoltheorem}, \ref{thm:existence}, and \ref{thm:asymptotics}. For the sake of brevity and to avoid technicalities, we formulate a unified statement of our main results in Theorem \ref{thm:main} below. 
\par 
We phrase our result in terms of relative equilibria to the point vortex flow, namely periodic solutions whose vortices appear stationary from a uniformly rotating frame. They consist of nested rings of vortices within the disc, uniformly rotating about its center; the number of vortices per ring is constant and all vortices in a given ring have the same degree. However, different rings might have vortices of varying degrees. In this paper, we are exclusively interested in relative equilibria with vortices of degree $\pm 1.$ We discuss relative equilibria in the unit disc in Section \ref{sec:PVF}, where we characterize them algebraically. To the best of our knowledge, this general characterization of relative equilibria in the unit disc is new.
\par 
Our main results are the content of Theorems \ref{gpsoltheorem},\ref{thm:existence} and \ref{thm:asymptotics} but we condense them here into one general- and less detailed statement. 
\begin{theorem} \label{thm:main}
Fix a relative equilibrium $(a_i(t), d_i)_{i=1}^N,$ where $a_i(t) \in \bD$ and $d_i \in \{\pm 1\}$ are the associated degrees satisfying $\sum_i d_i = n.$ Then there exists $\e_0 = \e_0(a_i)$ such that for all $0 < \e < \e_0,$ there exist smooth, non-constant time periodic vortex solutions $u_\e(x,t)$ of the Gross-Pitaevskii equations \eqref{eq:GP}-\eqref{eq:bc} and a fixed number $\theta_* \in [0,2\pi)$ such that for each $t \geq 0,$ as $\e \to 0,$ there holds
\begin{align}
Ju_\e(x,t) := \det (\nabla_x u_\e (x,t))  \,dx \rightharpoonup \pi \sum_{j=0}^N d_j \delta_{e^{i \theta_*}a_j(t)}
\end{align}
in the weak topology of $W^{-1,1}(\bD).$ 
\end{theorem}
\par
We next lay out the rotational framework that underlies the method of proof of Theorem \ref{thm:main}. For any positive integer $k,$ we say that a function $u(x) = u(r,\theta)$ enjoys a $k-$fold symmetry property if  
\begin{align} \label{kfoldsym}
u\left(r,\theta + \frac{2\pi}{k}\right) = u(r,\theta), \hspace{1cm} r \in (0,1],\theta \in [0,2\pi].
\end{align}
In order for a function with the $k-$fold symmetry property to be compatible with the boundary condition \eqref{eq:bc}, it is necessary that $k$ divides $n.$ Setting $n=km,$ we pursue the ansatz
 \begin{align} \label{ansatz}
u(x,t) = R(-k\omega t) v\big(R(\tfrac{\omega}{m} t)x\big),
\end{align}
where $\omega \in \bR$ is an unknown angular velocity and $v:\bD \to \bC;$ for any real number $\beta,$ the matrix $R(\beta)$ is the rotation matrix by angle $\beta$ given by 
\begin{align*}
R(\beta) = \begin{pmatrix}
\cos \beta & -\sin \beta \\
\sin \beta & \cos \beta
\end{pmatrix}.
\end{align*}
The function $u$ has the $k-$fold symmetry property if and only if $v$ does. Observe from \eqref{ansatz} that a function $u = u(x,t): \bD \times \bR \to \bC$  which enjoys the $k-$fold symmetry property for each $t \in \bR$ is periodic with time period 
\begin{align*}
T = \frac{2\pi m}{|\omega|}.
\end{align*}
We note that in defining $m$ in the ansatz \eqref{ansatz} above, we have implicitly assumed that $n \geq 1$ to ensure that $m \neq 0.$ The case $n = 0$ can easily be fit into this framework: simply take $k = 0$ and $m$ a positive integer that reflects the desired symmetry property of the function $v.$ Furthermore, if the function $v$ has degree $n,$ and has the $k-$fold symmetry property for a divisor $k$ of $n,$ it is clear that the number of $+1$ and $-1$ vortices are all divisible by $k.$ 
\par
Using ansatz \eqref{ansatz} in \eqref{eq:GP} yields the elliptic system
\begin{align} \label{ansatzpde}
\Delta v(y) + \frac{v}{\e^2}(1-|v|^2)(y) &= \omega\left(k v + \frac{1}{m} y^\perp \cdot \nabla v^\perp\right)(y), \hspace{1cm} y \in \bD,\\ \label{ansatzbc}
v(e^{i \theta}) &= e^{i n \theta}.
\end{align}
\par 
Our strategy in proving Theorem \ref{thm:main} is variational and symmetry plays a crucial role in the development. The rotational invariance of both the Ginzburg Landau energy as well as the renormalized energy yields by Noether's theorem, additional conserved quantities captured by the functional
\begin{align} \label{momconstraint}
\scrj(v) := -\frac{1}{2}\int_\bD k|v|^2 + \frac{1}{m} v \cdot (y^\perp \cdot \nabla) v^\perp \,dy
\end{align}
for the GP system, and $\scrj_0$ defined in \eqref{eq:fdmomentum} for the point vortex system. 
\par 
The elliptic system \eqref{ansatzpde} has two related variational viewpoints, a constrained minimization viewpoint adopted in Section 3, and a topological one pursued in Section 4. 
\par 
The main result of Section \ref{sec:Min}, Theorem \ref{gpsoltheorem} covers the case of relative equilibria with a single vortex ring containing vortices of degree either $1$ or $-1,$ but not both. The proof of Theorem \ref{gpsoltheorem} proceeds by the strategy of constrained minimization followed by an application of the vortex balls construction; the method is adapted from \cite{GS}. The constrained minimization step involves minimizing the Ginzburg Landau energy within a fixed symmetry class, and with the constraint $\scrj$ value equal to the $\scrj_0$ value of the given relative equilibrium. The unknown angular momentum $\omega_\e$ arises as a Lagrange multiplier associated to the momentum constraint. The advantage of this method is that it is simple and relies on the complete integrability of single vortex rings. The disadvantage of this approach however, is that we are so far unable to obtain $\e-$independent bounds on the Lagrange multipliers $\omega_\e.$ This restricts the analysis of even multi-ring solutions having only degree $+1$ vortices.
\par The second approach, based on a linking argument in the spirit of \cite{LinMinMax}, is contained in Section \ref{sec:Lin} and allows us to handle any relative equilibrium, including those with a mix of $+1$ and $-1$ vortices. The main results of Section 4 are contained in Theorems \ref{thm:existence} and \ref{thm:asymptotics}. The linking approach is more involved, and we present a sketch of the proof at the outset of Section \ref{sec:Lin} before describing it in detail. 
\par 
In Appendix A we collect a few technical results that prove useful in Sections 3 and 4. 
\par 
\vspace{.4cm}
\underline{\textbf{Notation:}} We frequently identify a complex number $z = z_1 + i z_2,$ where $z_1,z_2 \in \bR,$ with the point $(z_1,z_2) \in \bR^2.$ Consequently, given a pair of complex numbers $z,w = w_1 + i w_2,$ we write $(z,w) := z_1 w_1 + z_2 w_2,$ i.e. their dot product as vectors in $\bR^2.$ We emphasize that this is a real inner product. By abuse of notation, given $u \in \bC$ and $v = (v_1,v_2) \in \bC^2,$ we also write $(u,v) = ((u,v_1),(u,v_2)) \in \bR^2.$ We denote $J := R(\pi/2).$  Given $v = (v_1,v_2) \in \bR^2,$ we write $v^\perp :=  (-v_2,v_1).$ Next, if $v, w \in \bR^2,$ where $v = (v_1,v_2), w= (w_1, w_2),$ then $v \times w = v_1 w_2 - v_2 w_1.$ The ball of radius $R > 0,$ centered at a point $x$ is denoted $B_R(x).$ Unless otherwise mentioned, balls are open. \par 
Generic constants are denoted by $C,$ and the value of $C$ may vary with each occurrence. Such constants are positive and never depend on $\e.$ Constants with subscripts such as $c_1$ stay the same within the proof in which they occur. 
\par 
For any integer $k \geq 1$ we denote as usual by $\bD^k$ the $k-$fold Cartesian product of $\bD$ with itself, and   
\begin{align*}
({\puncD})^k := \{b = (b_1,\cdots, b_k) \in \bD^k, b_i \neq b_j \mbox{ if } i \neq j \}.
\end{align*}
Given a set $S \subset \bC$ and a complex number $\lambda \neq 0,$ we write 
\begin{align}
\label{eq:multipleofset}
\lambda S := \{\lambda s : s \in S\}. 
\end{align}
Finally, since $\mathbb{C}^k$ is a $k-$dimensional complex vector space, for any point $p \in \mathbb{C}^k,$ with $p = (p_1,\cdots, p_k),$ we write $\lambda p = (\lambda p_1,\cdots, \lambda p_k ) \in \bC^k,$ where $\lambda \in \bC.$  

\section*{Acknowledgements}
This work is part of my PhD thesis at Indiana University. I am extremely grateful to my thesis adviser, Professor Peter Sternberg, for his invaluable advice. Thanks are also due to Professor Robert L. Jerrard who suggested the rotational ansatz for the case of a single vortex, i.e. $n = k=m=1$ in Eq. \eqref{ansatz} above, and to the anonymous referee for their several useful comments that resulted in an improved manuscript. Finally, I thank partial support from the NSF through grants D.M.S. 1101290 and D.M.S.1362879. 

\section{Renormalized Energy and the Point Vortex Flow}  \label{sec:PVF}
In this section, we first recall the renormalized energy identified in \cite{BBH}. Subsequently, we discuss relative equilibria to the point vortex system of ODE's posed in the unit disc, and characterize them algebraically, following similar arguments in \cite{lewisratiu} for the point vortex system on $\bC.$ 
\par The renormalized energy is a function of vortex locations and associated vortex strengths. We recall that for $\mathbf{b} = (b_1, \cdots, b_N) \in ({\puncD})^N,$ and vortex strength $d = (d_1,\cdots, d_N) \in \bZ^N$ with $\sum d_i = n,$
\begin{align}
\label{renorm}
W(\mathbf{b},d) := -\pi \sum_{j \neq k} d_j d_k \log|b_j - b_k| - \pi \sum_{j=1}^N d_j \mathfrak{R}_0(b_j) + \frac{1}{2}\int_{\bdry} \Phi_0(g \times g_\tau).
\end{align}
Here $g$ denotes the Dirichlet data, and $\tau$ denotes the unit tangent vector to $\partial \bD,$ and $\Phi_0$ is the unique solution to the linear problem 
\begin{align}
\label{Phi0} 
\Delta \Phi_0 &= 2\pi \sum_{j=1} d_j \delta_{b_j}, \hspace{1cm} \mbox{ in } \bD,\\
\label{eq:Phi0bc}\partial_\nu \Phi_0|_{\bdry} & = g \times g_\tau, \\
\label{eq:Phi0norm}\int_{\bdry} \Phi_0 &= 0;
\end{align}
and we define
\begin{align} \label{eq:R0}
\mathfrak{R}_0(z) := \Phi_0(z) - \sum_{j=1}^N d_j \log|z-b_j|.
\end{align}
Note that $\mathfrak{R}_0$ defines a continuous function in $\overline{\bD}.$ When $g = g_n,$ it is clear that $g \times g_\tau \equiv n.$ In this case, the formula \eqref{renorm} for the renormalized energy assumes the explicit form in \eqref{renormn} below. Using the Poisson integral formula, it can be checked that
\begin{align}\label{renormn}
W(\mathbf{b},d) = -\pi \sum_{j\neq k} d_j d_k \log|b_j - b_k| - \pi \sum_{j,k=1}^N d_j d_k \log|1-\overline{b}_j b_k|;
\end{align}
the reader can find details of a similar calculation in \cite{SandierSoret}\footnote{We thank the referee for bringing this reference to our attention.}. We point out that $W$ still depends on the degree $n$ of the boundary conditions via $\sum d_i = n.$ It is also crucial to notice in what follows that $W$ is invariant under the action of rotations in the sense that for any $\theta \in \bR,$ if we write $e^{i\theta}\mathbf{b} = (e^{i\theta}b_1,\cdots, e^{i\theta}b_N),$ then $W(\mathbf{b},d) = W(e^{i\theta}\mathbf{b},d).$ Next, we recall several periodic solutions to the point vortex flow posed in the unit disc. 
\par 
\subsection{Periodic Solutions to Point Vortex Flow}
The point vortex flow is a system of ordinary differential equations defined by 
\begin{align} \label{pvfgen}
d_j\left(\frac{\,db_j}{\,dt} \right)^\perp = - \frac{1}{\pi} \nabla_{b_j} W(b), \hspace{1cm} j=1,\cdots, N. 
\end{align}
Using \eqref{renormn}, we re-write \eqref{pvfgen} as the system 
\begin{align} \label{pvfode}
d_\ell\overline{\frac{\,db_\ell(t)}{\,dt}} = - 2i \left(\sum_{k\neq \ell} \frac{d_k}{b_\ell (t) - b_k (t)} + \sum_{k=1}^N \frac{d_k \overline{b_k(t)}}{1 - \overline{b_k(t)}b_\ell(t)} \right), \hspace{.4cm} \ell = 1,\cdots, N. \hspace{.4cm} \mathrm{(PVF) }
\end{align}
This system of equations is Hamiltonian- the phase space is $(\bD^*)^N$ and the symplectic structure is the standard one inherited from $\bC^N.$ As a consequence, the Point Vortex Flow (PVF) conserves the Hamiltonian \eqref{renormn}. In addition, this flow also conserves an angular momentum-like quantity. For any $\mathbf{b} = (b_1,\cdots, b_N) \in (\puncD)^N ,d= (d_1,\cdots d_N) \in \{\pm 1\}^N,$ define
\begin{align}
\label{eq:fdmomentum} \scrj_0(\mathbf{b},d) := -\frac{1}{2}\sum_{j=1}^N d_j |b_j|^2,
\end{align}
The following Lemma is an easy calculation.  
\begin{lemma}
Let $(\mathbf{b}(t),d) = (b_1(t), \cdots, b_N(t); d_1,\cdots, d_N)$ be a solution to \eqref{pvfode}. Then  
\begin{align}
\frac{\,d}{\,dt} \scrj_0(\mathbf{b}(t),d) = 0.
\end{align}
\end{lemma}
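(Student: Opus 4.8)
The plan is the direct computation that the statement advertises as ``easy.'' Since each $d_j\in\{\pm1\}$, so $d_j^2=1$, and $\tfrac{d}{dt}|b_\ell(t)|^2=2\,\mathrm{Re}\big(\overline{b_\ell(t)}\,\dot b_\ell(t)\big)$, differentiating \eqref{eq:fdmomentum} gives
\[
\frac{d}{dt}\,\scrj_0(b(t))=-\frac12\sum_{\ell=1}^N d_\ell\,\frac{d}{dt}|b_\ell(t)|^2=-\,\mathrm{Re}\sum_{\ell=1}^N d_\ell\,\overline{b_\ell}\,\dot b_\ell ,
\]
so it suffices to show that $\sum_\ell d_\ell\,\overline{b_\ell}\,\dot b_\ell$ is purely imaginary. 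To see this I would conjugate the point vortex equations (so as to have an expression for $d_\ell\dot b_\ell$, using $d_\ell^2=1$), multiply by $\overline{b_\ell}$ and sum over $\ell$. Because the right-hand side of \eqref{pvfode} is a real constant times $i$ times a sum of terms of the two types $\dfrac{d_k}{b_\ell-b_k}$ ($k\ne\ell$) and $\dfrac{d_k\overline{b_k}}{1-\overline{b_\ell}b_k}$, each carrying the weight $d_\ell$ inherited from $\nabla_{b_\ell}W$, one is left with $\sum_\ell d_\ell\,\overline{b_\ell}\,\dot b_\ell$ equal to a real constant times $i\,(\mathcal S_{\mathrm{int}}+\mathcal S_{\mathrm{img}})$, where
\[
\mathcal S_{\mathrm{int}}=\sum_{\ell\ne k}\frac{d_\ell d_k\,\overline{b_\ell}}{\overline{b_\ell-b_k}},\qquad
\mathcal S_{\mathrm{img}}=\sum_{\ell,k=1}^N\frac{d_\ell d_k\,\overline{b_\ell}\,b_k}{1-\overline{b_\ell}b_k},
\]
and it remains only to check that $\mathcal S_{\mathrm{int}}$ and $\mathcal S_{\mathrm{img}}$ are real.

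For $\mathcal S_{\mathrm{int}}$ I would pair the term indexed by $(\ell,k)$ with the one indexed by $(k,\ell)$: the weight $d_\ell d_k$ is symmetric under this swap while $\overline{b_\ell-b_k}$ is antisymmetric, so the two combine to
\[
d_\ell d_k\!\left(\frac{\overline{b_\ell}}{\overline{b_\ell-b_k}}+\frac{\overline{b_k}}{\overline{b_k-b_\ell}}\right)=d_\ell d_k\,\frac{\overline{b_\ell}-\overline{b_k}}{\overline{b_\ell-b_k}}=d_\ell d_k\in\bR ,
\]
and summing over unordered pairs shows $\mathcal S_{\mathrm{int}}\in\bR$. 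For $\mathcal S_{\mathrm{img}}$ I would observe that the $(k,\ell)$ summand is exactly the complex conjugate of the $(\ell,k)$ summand, since $\overline{\overline{b_\ell}\,b_k}=\overline{b_k}\,b_\ell$ and $\overline{1-\overline{b_\ell}b_k}=1-\overline{b_k}b_\ell$, while $d_\ell d_k$ is real; relabeling the two summation indices then gives $\mathcal S_{\mathrm{img}}=\overline{\mathcal S_{\mathrm{img}}}$ (the diagonal terms $\ell=k$ being manifestly real), so $\mathcal S_{\mathrm{img}}\in\bR$. Hence $\sum_\ell d_\ell\,\overline{b_\ell}\,\dot b_\ell$ is purely imaginary and $\tfrac{d}{dt}\scrj_0(b(t))=0$.

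There is really no obstacle here beyond careful bookkeeping of the conjugations and the signs, together with the use of $|b_\ell-b_k|=|b_k-b_\ell|$ and $|1-\overline{b_\ell}b_k|=|1-\overline{b_k}b_\ell|$ to make the two interchanges work. The conceptual reason for the identity is Noether's theorem: $\scrj_0$ is, up to a constant, the moment map of the circle action $b\mapsto e(\theta)b$ on $(\puncD)^N$ with respect to the weighted symplectic form $\sum_j d_j\,dx_j\wedge dy_j$ for which \eqref{pvfode} is Hamiltonian, and since $W$ — hence the generating Hamiltonian — is rotation invariant (as already noted after \eqref{renormn}), $\scrj_0$ Poisson-commutes with it. Either route proves the lemma; the computation above is the shorter one.
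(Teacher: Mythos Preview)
Your computation is correct and is exactly the ``easy calculation'' the paper alludes to; the paper itself supplies no proof beyond that phrase, so there is nothing to compare against. One small remark on bookkeeping: the factor $d_\ell d_k$ in your sums $\mathcal S_{\mathrm{int}}$ and $\mathcal S_{\mathrm{img}}$ is precisely what makes the $(\ell,k)\leftrightarrow(k,\ell)$ symmetrization go through, and it is most transparent if you work directly from \eqref{pvfgen}, where the right-hand side $-\tfrac{1}{\pi}\nabla_{b_\ell}W$ manifestly carries the $d_\ell$ weight you mention, rather than from \eqref{pvfode} as displayed (there the $d_\ell$ sits on the left-hand side, so reading off the sums from the right-hand side alone would give only $d_k$ and the pairing would not yield a real number term by term). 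Your closing Noether-theoretic explanation via the rotation invariance of $W$ noted after \eqref{renormn} is the conceptual reason behind the identity and would serve equally well as a standalone proof.
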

We will now construct some relative equilibria, i.e. uniformly rotating periodic solutions to the ODE system \eqref{pvfode}. These are solutions obtained by pursuing the ansatz 
\begin{align}
\label{eq:def rel equilibrium}
b_\ell(t) = b_\ell e^{-i\tilde{\omega} t},
\end{align}
where $b_\ell \in \bD$ is a \textit{fixed} complex number and $\tilde{\omega} \in \bR.$ Several studies have been devoted to studying such solutions for the point vortex problem on $\bC,$ see \cite{vortexcrystals} for an extensive survey. For the unit disc with boundary conditions as in \eqref{eq:bc}, the simplest such solution is that with a single ring of $n$ vortices located at the vertices of a regular $n-$gon. These are known as vortex polygons and have been known way back since \cite{thomson}. In this case, one can show (cf.\cite{qdaithesis,thomson}) that the radius of rotation $\rho$ and the angular speed $\omega_0$ are related by
\begin{align}
 \label{singleringeq}
\omega_0 \frac{\rho^2}{2} = - \left(\frac{n-1}{2} + \frac{n \rho^{2n}}{1-\rho^{2n}} \right). 
\end{align}
Observe that $\omega_0 < 0.$ Furthermore, if the momentum value $\scrj_0 = -\frac{\rho^2}{2}$ is prescribed, then $\omega_0$ is uniquely determined from \eqref{singleringeq}. 
\par 
The point vortex system posed in $\bC$ is well studied; the system corresponding to the unit disc, has received some recent attention see \cite{qdaithesis,kurakin1,speetjens} and references therein. We now give below an algebraic characterization of relative equilibria in the disc which contains as special cases results of the aforementioned studies. We closely follow a similar such program carried out in \cite{lewisratiu} for the point vortex flow posed in $\bC.$ 
\par 
In order to capture relative equilibria by symmetry, let $k > 1$ be a divisor of $n.$ We characterize relative equilibria having the $k-$fold symmetry property, i.e. those that are fixed by the group of rotations through angle $\frac{2\pi}{k}.$ Denoting by $N_-$ the number of $-1$ vortices, $N_+ := n+ N_-$ the number of $+1$ vortices and $N = N_- + N_+$ the total number of vortices, we observe easily that $N, N_{\pm}$ are all divisible by $k.$ 
\par 
Given this set up, we index the rings by Latin subscripts such as $r,s$ that range from $1,\cdots, \frac{N}{k}$ and index vortices in a given ring with a second Greek sub-script. Thus, vortices on the $r$th ring, containing $k$ vortices each of strength $d_r \in \{\pm 1\},$  are labeled $(\zeta_{r\beta})_{\beta = 0}^{k-1}.$ Then the equations \eqref{pvfode} take the form 
\begin{align} \label{pvflowrings}
d_r \overline{\frac{\,d\zeta_{r\beta}(t)}{\,dt}}   =   -2i \left( \sum_{\alpha \neq \beta} \frac{d_r}{\zeta_{r\beta}(t) - \zeta_{r \alpha}(t)} + \sum_{\alpha = 0}^{k-1} \frac{d_r \overline{\zeta_{r\alpha}(t)}}{1 - \overline{\zeta_{r\alpha}(t)} \zeta_{r \beta}(t) }  \right. \\ \notag \left. + \sum_{s \neq r} \sum_{\alpha=0}^{k-1} \frac{d_s}{\zeta_{r\beta}(t) - \zeta_{s \alpha}(t)} + \sum_{s \neq r} \sum_{\alpha=0}^{k-1} \frac{d_s \overline{\zeta_{s\alpha}(t)}}{1 - \overline{\zeta_{s\alpha}(t)} \zeta_{r \beta} }\right).
\end{align}
Requiring $k-$fold symmetry, we pursue the ansatz
\begin{align*}
\zeta_{r\beta}(t) = e^{i\left(\frac{2\pi \beta}{k}\right)} \zeta_r(t).
\end{align*}
Plugging in this ansatz into \eqref{pvflowrings} and computing using the properties of $k$-th roots of unity, we find,
\begin{align}
d_r \zeta_r(t)\overline{ \zeta_r'(t)} = -2i \left(  \frac{k-1}{2} + \frac{d_r k |\zeta_r(t)|^{2k}}{1- |\zeta_r(t)|^{2k}} + k \sum_{s \neq r} d_s \zeta_r^k(t) \left \{ \frac{1}{\zeta_r^k(t) - \zeta_s^k(t)} + \frac{\overline{\zeta_s^k(t)}}{1- \overline{\zeta_s^k(t)} \zeta_r^k(t) }\right\}\right).
\end{align}
Seeking relative equilibria, we further suppose 
\begin{align*}
\zeta_r (t) = e^{-i\omega_0 t} \zeta_r,
\end{align*}
where $\zeta_r \in \bD.$ We obtain the following algebraic characterization of the relative equilibrium 
\begin{align} \label{eq:rings eq}
-d_r \omega_0 |\zeta_r|^2 = 2 \left( \frac{k-1}{2} + \frac{d_r k |\zeta_r|^{2k}}{1- |\zeta_r|^{2k} } + k \sum_{s \neq r} d_s\zeta_r^k \left\{\frac{1}{\zeta_r^k - \zeta_s^k} + \frac{\overline{\zeta_s^k}}{1 - \overline{\zeta_s^k} \zeta_r^k} \right\} \right).
\end{align}
This last expression holds for each $r = 1, \cdots, \frac{N}{k}.$ Thus for each $r,$ it is necessary that 
\begin{align} \label{algconditions}
\sum_{s \neq r} d_s \zeta_r^k  \left\{\frac{1}{\zeta_r^k - \zeta_s^k} + \frac{\overline{\zeta_s^k}}{1 - \overline{\zeta_s^k} \zeta_r^k} \right\} \in \bR. 
\end{align}
Writing $\zeta_s = \rho_s e^{i\phi_s},$ we introduce $\mu_{sr} = \frac{\rho_s}{\rho_r}, \nu_{sr} = \rho_s \rho_r,$ and $\psi_{sr} = \phi_s - \phi_r.$ Inserting this in the last equation, we find that \eqref{algconditions} is equivalent to 
\begin{align} \label{eq:angle conditions}
\sum_{s \neq r} d_s \sin(k\psi_{sr}) \left( P_{sr}(\mu_{sr}^k) - P_{sr}\left(\frac{1}{\nu_{sr}^k} \right)\right) = 0, 
\end{align}
with 
\begin{align}
P_{sr}(t) := \frac{2t}{1-2t \cos(k \psi_{sr}) + t^2}
\end{align}
holding for each $r = 1, \cdots, \frac{N}{k}.$ 
\par 
The conditions \eqref{eq:angle conditions} are satisfied provided 
\begin{align} \label{eq:angle conditions 2}
\sin(k \psi_{sr}) = 0,
\end{align}
or in other words, that $\phi_s - \phi_r = 0$ or $\frac{\pi}{k}$ (only $\mod \frac{2\pi}{k}$ matters because of symmetry \eqref{kfoldsym}). We have not pursued the necessity of \eqref{eq:angle conditions 2} in order that \eqref{eq:angle conditions} holds,  but it appears to be true for small $k,n.$ 
\par 
If for each pair of distinct $s,r$ one has $\phi_s - \phi_r = 0,$ the relative equilibrium is said to be an \textit{aligned} configuration. In such an arrangement, the vortices lie on the same "rays" emanating out of the origin (cf. Figure 1). 
\begin{figure}[H] 
 \centering
     \label{fig:aligned}
\begin{tikzpicture}
\begin{axis}[
    axis lines=middle,
    xmin=-10, xmax=10,
    ymin=-10, ymax=10,
    xtick=\empty, ytick=\empty
]
\addplot [only marks] table {
2.5 0
1.25  2.165
-1.25  2.165   
-2.5  0
-1.25  -2.165
1.25   -2.165
};
\addplot [only marks, mark=o] table {
5 0
2.5  4.33
-2.5  4.33   
-5  0
-2.5  -4.33
2.5   -4.33
};
\draw (axis cs:0,0) circle [blue, radius= 75];
\end{axis}
\end{tikzpicture}
\caption{An aligned configuration. The solid and hollow bullets indicate possibly different degrees. $k = 6.$ Not to scale. }
\end{figure}
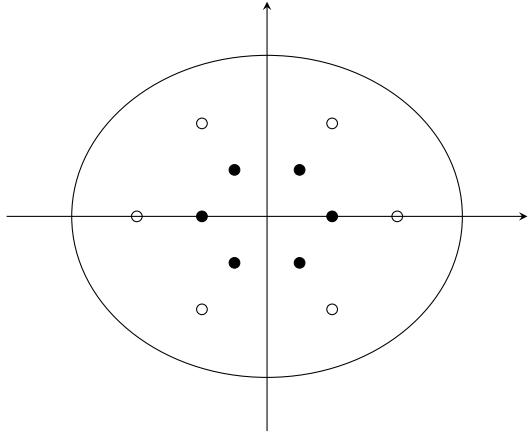
 \begin{figure} \label{fig:staggered}
        \centering
        \begin{tikzpicture}
\begin{axis}[
    axis lines=middle,
    xmin=-10, xmax=10,
    ymin=-10, ymax=10,
    xtick=\empty, ytick=\empty
]
\addplot [only marks] table {
2 0
0  2
-2  0  
0  -2
};
\addplot [only marks, mark=o] table {
3.535 3.535
-3.535  3.535
-3.535  -3.535   
3.535 -3.535
};
\draw (axis cs:0,0) circle [blue, radius= 75];
\end{axis}
\end{tikzpicture}
\caption{An staggered configuration. The solid and hollow bullets indicate possibly different degrees. $k = 4.$ Not to scale. }
\end{figure}
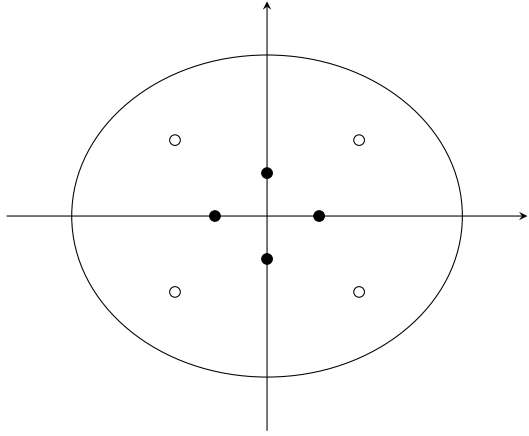
If on the other hand, $\phi_s - \phi_r = \frac{\pi}{k},$ for some $s,r$ the resulting arrangement of vortices is said to be a \textit{staggered} or \textit{alternate} configuration (cf. Figure 2 ). 
\par
Returning to \eqref{eq:rings eq} and fixing values of $\psi_{sr}$ results in a system of coupled algebraic equations with ring radii as unknowns. At this stage, the authors in \cite{lewisratiu} exhibit numerical examples in which they can solve the resulting system of algebraic equations. It is however important to note that vortex strengths can be arbitrary real numbers in \cite{lewisratiu}, whereas $d_r \in \{\pm 1\}$ for our purposes. 
Here are a few cases where one can solve the resulting system of algebraic equations. 
\begin{enumerate}
\item Suppose $N_- = 0,$ so $N = N_+ = n = km.$ Further suppose that $\psi_{sr} = 0.$ In other words we have $m$ rings of $k$ vortices each, all rings being aligned. In this case, for any $p \in (\frac{-n}{2},0)$ there exist ring radii $0 < \varrho_1 < \varrho_2 < \cdots < \varrho_m < 1$ solving the preceding system of algebraic equations. The case $m = 1$ is immediate and is \eqref{singleringeq}. The case $m =2$ can be found in \cite{qdaithesis}, and the general case follows by induction on $m.$  Further the angular velocity in this case is given by adding the equations \eqref{eq:rings eq}. One obtains 
\begin{align*}
-\omega_0 (\varrho_1^2 + \cdots \varrho_m^2) = 2 \left( m\frac{k-1}{2} + k \binom{m}{2} + k \sum_{r,s =1}^m \frac{(\varrho_r\varrho_s)^k}{1 - (\varrho_r \varrho_s)^k} \right). 
\end{align*} 
Observe once again that $\omega_0 < 0.$ Furthermore, setting $m = 1,$ along with the convention that the binomial coefficient $\binom{1}{2} = 0$ yield the classical solution \eqref{singleringeq}. 
\item Suppose $m = 2$ and $\psi_{12} = \frac{\pi}{k}.$ In this case define $r_0(k) = (\sqrt{4k^2 + 1}-2k)^{1/2k}.$  If $d_1 = 1$ and $d_2 = -1,$ then for any $r <r_0(k)$ there exist $0 < \varrho_1 < r < \varrho_2 < 1$ such that $k$ degree $+1$ vortices located at radius $\varrho_1,$ and $k$ degree $-1$ vortices located at radius $\varrho_2$ in a ring skewed relative to the first one, form a uniformly rotating relative equilibrium. On the other hand, if $d_1 = -1$ and $d_2 = 1$ then for any $r > r_0$ there exist $0 < \varrho_1 < r < \varrho_2 < 1$ such that the same is true. 
\item The special case with $\varrho_1 = \varrho_2$ is a relative equilibrium if and only if $\varrho_1 = \varrho_2 = r_0.$ In this case 
\begin{align*}
\omega_0 = \frac{4k + 1 - \sqrt{4k^2 + 1}}{8\pi (\sqrt{4k^2 + 1} - 2k)^{1/2k}}(d_1 + d_2). 
\end{align*}
In particular if we have that $d_1 + d_2 = 0$ then $\omega_0 = 0.$ In other words, it is a critical point of the re-normalized energy $W$. 
\end{enumerate}
We close this section by remarking adding the system of equations \eqref{eq:rings eq} in $r$ and using the definition of $\scrj_0$ yields the following expression for the angular speed $\omega_0:$ 
\begin{align*}
\omega_0 = \frac{1}{\scrj_0(\zeta;d)} \left( \frac{N}{k} \cdot \frac{k-1}{2} + k\sum_{r =1}^{N/k} \frac{d_r |\zeta_r|^{2k}}{1 - |\zeta_r|^{2k}} + k \sum_{r,s; r\neq s} d_s \zeta_r^k \left\{ \frac{1}{\zeta_r^k - \zeta_s^k} + \frac{\overline{\zeta_s^k}}{1 - \overline{\zeta_s^k} \zeta_r^k}\right\}\right). 
\end{align*} 

\section{Periodic Solutions to Gross-Pitaevskii via Constrained Minimization}
 \label{sec:Min}
In this section we will confine our attention to constructing periodic solutions to \eqref{eq:GP} that only carry vortices of degree $+1,$ (or $-1,$ but not both) and that follow corresponding solutions to the point vortex flow. Indeed, the strategy in this section is to focus on constrained minimizers of $E_\e,$ which can only possess vortices of one sign. The main result of this section is contained in Theorem \ref{gpsoltheorem}; we begin with some preliminary results. 
\begin{proposition} \label{prop1}
Let $p \in \bR.$ Assume $k$ is a positive integer that divides $n.$  Then constrained smooth critical points of the Ginzburg Landau energy $E_\e$ defined in \eqref{gl} among competitors in the set 
\begin{align}
\label{admissible}
\mathcal{A}^{n,k}_p := \{ v \in H^1_{g_n}(\bD,\bC), v\big(e^{i(\frac{2\pi}{k})}z \big) = v(z), \scrj(v) = p    \}
\end{align}
satisfy the elliptic PDE \eqref{ansatzpde} with $\omega_\e$ arising as a Lagrange multiplier. 
\end{proposition}
\begin{proof}
The proof requires computing the first variations $\delta E_\e(v)$ and $\delta\scrj(v)$ of the Ginzburg-Landau Energy \eqref{gl} and the constraint $\scrj$ defined in \eqref{momconstraint} respectively. For any $w \in H^1_0(\Omega)$ we compute, 
\begin{align*}
\delta E_\e(v)w = \frac{\,d}{\,d s} \Big\vert_{s=0} E_\e(v + sw) =  \int_{\bD} \nabla v \nabla w - \frac{v \cdot w}{\e^2}(1-|v|^2),
\end{align*}
and 
\begin{align*}
\delta \scrj(v)w = \frac{\,d}{\,d s} \Big\vert_{s=0} \scrj(v + sw) = - \int_{\bD} k v \cdot w + \frac{1}{m} w \cdot (y^\perp \nabla)v^\perp \,dy.
\end{align*}
By the Lagrange multiplier theorem, there exists $\omega_\e \in \bR$ such that constrained critical points $v$ satisfy 
\begin{align*}
\delta E_\e(v)w = \omega_\e \delta \scrj (v) w.
\end{align*}
The result of the proposition follows by integrating by parts once on the left-hand side. We remark that the symmetry conditions in the definition of the admissible class do not alter the Euler-Lagrange equations, as can be easily checked using the weak formulation. 
\end{proof}
\par 
Suppose next that $k,p$ are such that the set $\mathcal{A}^{n,k}_p \neq \emptyset.$ We next show that the Direct Method in the calculus of variations yields existence of a constrained minimizer of $E_\e$ within this class. 
\begin{proposition} \label{prop2directmethod}
Let $p \in \bR,$ suppose $k$ is a divisor of $n,$ and we have that the admissible set $\mathcal{A}^{n,k}_p$ defined in \eqref{admissible} is nonempty. Then, for each $\e >0,$ there exists a minimizer $v$ of the Ginzburg Landau energy $E_\e$ within the admissible class $\mathcal{A}^{n,k}_p.$ Moreover, $v$ is a smooth solution to the PDE \eqref{ansatzpde}.
\end{proposition}
\begin{proof}
Suppose $\{v_k\}$ is a minimizing sequence in $\mathcal{A}^{n,k}_p \neq \emptyset.$ Then, it is clear that $\{v_k\}$ is bounded in $H^1,$ and hence by Alaoglu's theorem, converges weakly (upon extraction, not denoted) to $v \in H^1(\bD, \bC).$  By Rellich's theorem, a subsequence of $\{v_k\}$ (not relabeled) converges strongly in $L^2$ to $v.$  Using standard facts from weak convergence, we conclude that $\scrj(v_k) \to \scrj(v) = p.$ Finally, by Mazur's theorem, $v$ has the same boundary conditions as $v_k;$ and furthermore, inherits the symmetry property of $v_k.$ Thus $v$ is a constrained minimizer of the Ginzburg Landau energy, with $\scrj(v) = p.$ Since the constraint functional $\scrj$ is merely quadratic so that $\delta \scrj$ is a bounded linear functional on $H^1,$ standard arguments then show that $v$ is a weak solution to the associated constrained Euler-Lagrange equations with an unknown Lagrange multiplier. Elliptic regularity then implies that $v$ is smooth up to the boundary. Consequently, by  Proposition \ref{prop1}, the desired conclusion follows. 
\end{proof}
We will first study single ring vortex polygons and obtain time periodic solutions to Gross Pitaevskii that follow these solutions to (PVF). The case of multiple ring solutions is harder via constrained minimization, and we are only able to show the existence of multiple ring time periodic solutions to \eqref{eq:GP}, but are unable to show that these follow corresponding relative equilibria to (PVF). We describe this partial result towards the end of this section. 
\subsection{Single Ring Solutions}
Given a vortex polygon as in \eqref{singleringeq} above, we now construct a sequence of solutions $v_\e$ to \eqref{ansatzpde} for $\e > 0$ small, whose vortices follow this vortex polygon as $\e \to 0.$ We note that for such single ring solutions we use $m = 1,$ i.e. $k = n$ in the definition of $\scrj.$  
\begin{proposition} \label{existenceminsingle}
Let $p \in (-\frac{n}{2},0).$ There exist a sequence of constraint values $p_\e \to \pi p$ as $\e \to 0$ and a corresponding sequence of constrained minimizers $v_\e$ of $E_\e$ within the class $\mathcal{A}^{n,k}_{p_\e}$ satisfying 
\begin{align} \label{logbound}
E_\e(v_\e) \leq n \pi \log \frac{1}{\e} + O_\e(1)
\end{align}
as $\e \to 0.$ 
\end{proposition}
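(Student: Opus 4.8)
The plan is to produce the competitors explicitly using the building blocks of Proposition \ref{prop:BBH construction} and then perturb the radius to land exactly on the prescribed momentum level, rather than trying to solve a constrained minimization problem from scratch. First I would fix the single ring configuration: for a radius $\rho \in (0,1)$, let $p(\rho) \in (\puncD)^n$ denote the $n$ vertices of the regular $n$-gon inscribed in the circle of radius $\rho$, each carrying degree $+1$, so $\sum d_i = n$ and $m=1$, $k=n$. Applying Proposition \ref{prop:BBH construction} (parts (1)--(4), with the $n$-fold symmetry \eqref{sym1}) gives a function $w^\e_{p(\rho)} \in \mathcal{A}^{n,n}_{\scrj(w^\e_{p(\rho)})}$ with $E_\e(w^\e_{p(\rho)}) = n(\pi \log \frac{1}{\e} + \gamma) + W(p(\rho),d) + o_\e(1)$ and $\scrj(w^\e_{p(\rho)}) = \frac{\pi}{1}\scrj_0(p(\rho),d) + o_\e(1) = -\frac{n\pi \rho^2}{2} + o_\e(1)$, since $\scrj_0 = -\frac12 \sum_j \rho^2 = -\frac{n\rho^2}{2}$.

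Next I would use this to hit the target. Given $p \in (-\frac n2, 0)$, I want $\scrj_0(p(\rho),d) = p$, i.e. $-\frac{n\rho^2}{2} = p$, which has the solution $\rho_* = \sqrt{-2p/n} \in (0,1)$ precisely because $p \in (-\frac n2,0)$. Set $p_\e := \scrj(w^\e_{p(\rho_*)})$; by part (2) of the Proposition, $p_\e \to \pi \scrj_0(p(\rho_*),d) = \pi p$ as $\e \to 0$, which is the claimed convergence of constraint values. Now the admissible class $\mathcal{A}^{n,n}_{p_\e}$ is nonempty (it contains $w^\e_{p(\rho_*)}$), so Proposition \ref{prop2directmethod} produces a constrained minimizer $v_\e$ of $E_\e$ in $\mathcal{A}^{n,n}_{p_\e}$, which by Proposition \ref{prop1} solves \eqref{ansatzpde} with a Lagrange multiplier $\omega_\e$. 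The energy bound \eqref{logbound} is immediate by minimality: $E_\e(v_\e) \le E_\e(w^\e_{p(\rho_*)}) = n\pi \log\frac1\e + n\gamma + W(p(\rho_*),d) + o_\e(1) \le n\pi\log\frac1\e + O_\e(1)$, using that $W(p(\rho_*),d)$ is a fixed finite constant independent of $\e$.

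The one point that needs a small argument is why $\scrj(w^\e_{p(\rho)})$ can be taken \emph{exactly} equal to $p_\e$ with $p_\e$ converging to $\pi p$ — but this is tautological if we simply \emph{define} $p_\e$ to be that value, so there is genuinely nothing to prove there beyond invoking part (2). A slightly more delicate variant, which I would mention, is that one might instead want to prescribe $p_\e$ first (say $p_\e = \pi p$ on the nose) and then verify $\mathcal{A}^{n,n}_{\pi p} \ne \emptyset$; this requires knowing that the map $\rho \mapsto \scrj(w^\e_{p(\rho)})$, which is continuous in $\rho$ and close to the continuous function $-\frac{n\pi\rho^2}{2}$ whose range covers a neighborhood of $\pi p$, is actually surjective onto an interval containing $\pi p$ for $\e$ small, by an intermediate value argument. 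Either formulation works; the cleaner one for the statement as written is to let $p_\e$ float and only claim $p_\e \to \pi p$.

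\textbf{Main obstacle.} There is no serious obstacle in this statement itself — it is essentially a packaging of Proposition \ref{prop:BBH construction} together with the direct-method existence Proposition \ref{prop2directmethod}. The only thing to be careful about is bookkeeping with the factor $\pi$ and with the normalization $m=1$ in the single-ring case (so that $\scrj$ uses $k=n$), making sure the momentum of the test function $w^\e_{p(\rho)}$ really is $\frac{\pi}{m}\scrj_0 + o_\e(1) = -\frac{n\pi\rho^2}{2} + o_\e(1)$ and that this sweeps out exactly $\pi \cdot(-\frac n2,0)$ as $\rho$ ranges over $(0,1)$. The genuinely hard analysis — showing the vortices of $v_\e$ actually converge to the vertices of the rotating $n$-gon, and identifying $\omega_\e \to \omega_0$ from \eqref{singleringeq} — is deferred to later results and is not part of this proposition.
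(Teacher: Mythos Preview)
Your proposal is correct and follows essentially the same route as the paper: fix the regular $n$-gon of radius $\rho_*=\sqrt{-2p/n}$, use the competitor $w^\e_{p(\rho_*)}$ from Proposition~\ref{prop:BBH construction} to define $p_\e:=\scrj(w^\e_{p(\rho_*)})\to\pi p$, invoke Proposition~\ref{prop2directmethod} for existence of the constrained minimizer $v_\e$, and obtain \eqref{logbound} by comparison. Your additional remark about alternatively hitting $p_\e=\pi p$ exactly via an intermediate value argument is a nice observation beyond what the paper does, but the floating-$p_\e$ version you give first is precisely the paper's argument.
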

\begin{proof}
Fix $p \in (-\frac{n}{2},0),$ and select $r \in (0,1)$ such that 
$\frac{n  r^2}{2} = -p. $ Define then points $b_j = r e^{i\big(\frac{2\pi j}{n}\big)},$ and write $b = (b_1,\cdots, b_n).$ Recall from \eqref{eq:fdmomentum} that 
\begin{align*}
\scrj_0(b,d) = -\frac{1}{2} \sum_{i=1}^n |b_i|^2 = -n\frac{r^2}{2} = p.
\end{align*}
Let $\tilde{v}_\e$ be a function as in Proposition \ref{prop:BBH construction}, satisfying 
\begin{align}
E_\e(\tilde{v}_\e) = n\left( \pi \log \frac{1}{\e} + \gamma \right) + W(b) + o_\e(1),
\end{align}
and 
\begin{align}
|\scrj(\tilde{v}_\e) - \pi p| = o_\e(1) 
\end{align}
as $\e \to 0.$ Define $p_\e := \scrj(\tilde{v}_\e).$ Then $\tilde{v}_\e \in \mathcal{A}^{n,n}_{p_\e},$ so the latter is not the empty set. Consequently, by Proposition \ref{prop2directmethod}, we conclude that there exists a smooth minimizer $v_\e$ with $\scrj(v_\e) = p_\e,$ and by comparison with the energy of $\tilde{v}_\e,$ we have \eqref{logbound}.
\end{proof}
We are now ready for the main result of this section. This uses in a crucial way the vortex balls construction (cf. \cite[Theorem 4.1]{serfatysandierbook}). 
\begin{theorem} \label{gpsoltheorem}
Let $b(t)$ be a single vortex polygon solution to \eqref{pvflowrings} in the unit disc, with vortices located on a circle $C$ of radius $\rho,$ related to the angular speed $\omega_0$ of the flow by \eqref{singleringeq}. Then for all sufficiently small $\e>0$ there exists a $\frac{2\pi}{|\omega_\e|}$-periodic solution $u_\e(t,x) = R(-n\omega_\e t) v_\e(R(\omega_\e t)x)$ (cf. \eqref{ansatz}) satisfying
\begin{align*}
u_\e(r,\theta + \frac{2\pi}{n},t) = u(r,\theta,t)
\end{align*}
for all time $t \in \bR,$ and verifying the energy estimate $E_\e(u_\e) \leq \pi n |\log \e| + O_\e(1)$ as $\e \to 0.$ Furthermore, there exist a finite collection of disjoint balls $\mathscr{B}_\e$ in $\bD$ including $n$ balls $ B^j := e^{i(\tfrac{2\pi j}{d})}B^0_{r_\e}$ for $j=0,\cdots, n-1$ such that 
\begin{enumerate}
\item The radii $r_\e$ converge to $0$ as $\e \to 0,$ 
\item Outside the collection $\mathscr{B}_\e,$ we have that $|u_\e| > 1/2,$ and so $\deg(u_\e,\partial B)$ is well defined for each ball $B \in \mathscr{B}_\e.$ Moreover, $\deg(u_\e, \partial B^j) = 1,$ and $\deg(u_\e, \partial B) = 0$ for any $B \in \mathscr{B}_\e \backslash \bigcup_j B^j,$ 
\item $|\mathscr{B}_\e| = o_\e(1)$ as $\e \to 0.$ 
\item The common circular orbit $C^\e$ associated with the rotation of the centers of the balls $B^j$  approaches $C$ as $\e \to 0.$ 
\end{enumerate}
\end{theorem}
\begin{proof}
Let $b(t)$ be a vortex polygon solution to \eqref{pvfode}. Then $b(t) = (b_0(t), b_1(t),\cdots, b_n(t)),$ where $b_j(t) = re^{i(-\omega_0 t)} e^{i(\frac{2\pi j}{n})},$ and $r $ and $\omega_0$ are related by \eqref{singleringeq}. Set $p := -\frac{n r^2}{2}.$ Then by Proposition \ref{existenceminsingle}, for sufficiently small $\e >0,$ there exist a sequence of solutions $v_\e$ to \eqref{ansatzpde} satisfying the bound 
\begin{align*}
E_\e(v_\e) \leq n \pi \log \frac{1}{\e} + O_\e(1),
\end{align*}
and 
\begin{align*}
|\scrj(v_\e) - \pi p| = o_\e(1)
\end{align*}
as $\e \to 0.$ Setting $u_\e(t,x) = R(-n\omega_\e t) v_\e(R(\omega_\e t)x)$ proves the existence of periodic orbits to GP with the desired symmetry property and energy bound. It remains to prove items (1-4) in the statement of the Theorem. The logarithmic energy bound on the functions $v_\e$ permits us to invoke the vortex balls construction \cite[Theorem 4.1]{serfatysandierbook}, which when applied to $v_\e$ with $\alpha = \frac{1}{2},$ implies that there exist a finite collection of disjoint closed balls $\mathcal{B}_\e = \{B^\e_j\}_{j=1}^{N_\e}$ with $\sum_{j=1}^{N_\e} r_j^\e \lesssim \e^{1/4},$ where $r_j^\e$ is the radius of the ball $B^\e_j,$ and 
\begin{align*}
\{x \in \bD: \big||v_\e(x)| - 1 \big| \geq \e^{1/8} \} \subset \bigcup_{j=1}^{N_\e} B^\e_j.
\end{align*} 
This implies items (1) and (3), and also that $|v_\e| > 1/2$ outside the collection $\mathcal{B}_\e.$ Set $D_\e = \sum_{j=1}^{N_\e} |d^\e_j|,$ with $d^\e_j$ as in the statement of \cite[Theorem 4.1]{serfatysandierbook}. Then it can be seen that $D_\e > 0,$ and $d^\e_j \equiv +1.$  The proof of this fact uses the observation if $D_\e = 0$ for some $\e,$ then the momentum $\scrj(v_\e)$ is forced to be vanishingly small, while if $d^\e_j \neq +1$ for some $j,$ then the Ginzburg-Landau energy $E_\e$ is too big. Since the details are very similar to Theorem 4.5 of \cite{GS} we omit them here. 
\par The facts that $D_\e > 0$ and $d^\e_j \equiv +1,$ combined with the lower energy bound supplied by the vortex balls construction, cf. \cite[Eqn. (4.4), Theorem 4.1]{serfatysandierbook} and the symmetry of $v_\e$  shows item (2) is true. It remains to show (4). Denote the centers of the balls $B^j$ mentioned in the statement of the Theorem by $a_\e^j.$ Then, similar to the proof of \eqref{momentumoptimal} in Proposition \ref{prop:BBH construction} we can argue that $\scrj(v_\e) = -\frac{n\pi}{2} \sum_{j=1}^{n} |a_\e^j|^2 +o_\e(1)$ as $\e \to 0.$ But since $\scrj(v_\e) = p_\e = \pi p + o_\e(1),$ and since $|a_\e^j | = |a_\e^0|$ for all $j,$ it follows that $|a_\e^j - re^{i(\frac{2\pi j}{n})}| = o_\e(1)$ as $\e \to 0.$ This completes the proof of item (4), and also that of the theorem.  
\end{proof}
It is natural to hope to be able to relate the speeds $\omega_\e$ of the time periodic solutions $v_\e$ to \eqref{eq:GP} that we have constructed, to the speed $\omega_0$ of (PVF) given by \eqref{singleringeq}. This seems to require (and in fact, to be equivalent to) a bound on the potential term of the form 
\begin{align*}
\frac{1}{\e^2}\int_{\bD} (1- |v_\e|^2)^2 \leq C.
\end{align*}
The general starting point for an estimate like this is a Pohazaev identity, cf. Lemma \ref{lem:Pohazaev} below. However, in the present case, this approach fails since we do not have control on $\omega_\e$ independent of $\e.$ 
\subsection{Remarks on Multiple Ring Solutions}
In this section, we briefly show existence of solutions to Gross Pitaevskii that have multiple rings of $+1$ vortices that are aligned. Regrettably however, we are as yet unable to relate these solutions to corresponding solutions of point vortex flow in a very concrete manner. The difficulty is the same as earlier: we do not have the much-sought after uniform bound on the potential term. 
\par 
Once again, let $\mathbf{a} = (a_1,\cdots, a_n)$ denote a relative  equilibrium with $d_i \equiv 1$ for all $i = 1,\cdots, n.$ Let $k$ divide $n$ and suppose the $a_1,\cdots a_n$ are arranged in $\frac{n}{k}$ rings of radii $\rho_1, \cdots, \rho_{n/k}$ respectively, each containing $k$ vortices each. Suppose further that the rings are aligned. Thus 
\begin{align*}
\scrj_0(a_1,\cdots,a_n) = -\frac{k}{2}(\rho_1^2 + \cdots + \rho_{n/k}^2). 
\end{align*}
The strategy is to find constrained minimizers of $E_\e(u)$ within the admissible set with further symmetry assumptions that ensure that the rings are aligned. Specifically, define
\begin{align}
\mathfrak{B}^{n,k}_p := 
\left\{ u \in H^1_{g_n} (\bD, \bC)\big| \left. \begin{array}{cc} u\left(r,\theta + \frac{2\pi}{k}\right) &= u(r,\theta), 0 \leq r \leq 1, \theta \in [0,2\pi) \\   u\left(r, \frac{\pi j}{k} + \theta \right) &= \overline{u}\left(r,\frac{\pi j}{k} - \theta\right),  j \mbox{ odd }, \theta \in [0, \frac{\pi}{k}]\end{array}\right. \right\}. 
\end{align}
Furthermore, it can also be checked easily that the constraints imposed above do not alter the Euler Lagrange equation \eqref{ansatzpde}. We then note that Proposition \ref{prop2directmethod} applies and yields a smooth constrained minimizer for constraint values $p$ such that $\mathfrak{B}^{n,k}_p \neq \emptyset.$  Then, given $p \in (-\frac{n}{2},0),$ we fix a solution to (PVF) with this $p-$value, noting that such solutions are not necessarily unique. Proceeding as in Proposition \ref{existenceminsingle}, we obtain a sequence $p_\e \to p$ as $\e \to 0$ and corresponding sequence of constrained minimizers $v_\e \in \mathfrak{B}^{n,k}_{p_\e}$ satisfying an energy bound 
\begin{align*}
E_\e(v_\e) \leq n \pi \log \frac{1}{\e} + O_\e(1)
\end{align*} 
as $\e \to 0.$ However, contrary to the single ring case, we can \textit{not} obtain any connection between vortex locations of $v_\e$ and those corresponding to solutions of (PVF). Indeed, in the single ring case, if $\varrho$ is the radius of the ring, then prescribing a constraint value for $\scrj_0(b) = - \frac{\varrho^2}{2},$  fixes $\varrho.$ In the case of multiple rings on the other hand, a condition of the form 
\begin{align*}
\varrho_1^2 + \cdots + \varrho_{n/k}^2 = p
\end{align*}
does not suffice to determine $\varrho_1, \cdots, \varrho_{n/k}.$ Nevertheless, using the vortex balls theorem and the Jacobian estimate, we \textit{can} argue that $Jv_\e \rightharpoonup \pi \sum_{i=1}^n d_i \delta_{c_i}$ for points $c_i \in \bD$ that have the same symmetry properties as the $\{a_i\}.$ What we \textit{cannot} argue, is that the collection of points $\{c_i\}$ is the same as the collection $\{a_i\}$ or even that it is a relative equilibrium of (PVF). 

\section{Periodic Solutions to Gross-Pitaevskii via Linking} \label{sec:Lin}
In this section, we expand on the results of the preceding section, using a completely different approach that is topological. Given a relative equilibrium for the point vortex flow with time period $T,$ for $\e > 0$ sufficiently small, we construct periodic solutions to the Gross Pitaevskii equation that have the \textit{same time period} $T,$ and follow the given relative equilibrium to point vortex flow as $\e \to 0.$ The approach of this section also permits the analysis of multi-ring solutions with vortices of degrees $\pm 1$ coexisting. 
\par 
We present the results of this section in two main theorems, viz. Theorems \ref{thm:existence} and \ref{thm:asymptotics}. Theorem \ref{thm:existence} concerns itself with existence of periodic solutions to Gross Pitaevskii for sufficiently small $\e,$ while Theorem \ref{thm:asymptotics} concerns itself with $\e \to 0$ asymptotics. For the convenience of the reader, we begin by informally sketching the ideas involved in the proofs of these theorems. \par
\textbf{Sketch of the ideas in the proof:}
Existence of periodic orbits to GP corresponding to a given relative equilibrium is proved by obtaining critical points of the functional $\cale^{\omega_0},$ defined by 
\begin{align}
\label{scripte}
\cale^{\omega_0} (u) := \int_\bD \frac{|\nabla u|^2}{2} + \frac{1}{4\e^2} (1-|u|^2)^2 + \frac{\omega_0}{2} \int_\bD k |u|^2 + \frac{1}{m} u \cdot (y^\perp \cdot \nabla )u^\perp \,dy,
\end{align}
where $\omega_0$ is the angular speed of rotation of the given relative equilibrium of (PVF), cf. formula \eqref{eq:period formula}. Indeed, the system \eqref{ansatzpde} with $\omega = \omega_0$ represents criticality conditions of the functional $\cale^{\omega_0} \equiv E_\e - \omega_0 \scrj.$ We find critical points of $\scre^{\omega_0}_\e$ as large time limits of the gradient flow of $\scre^{\omega_0}_\e,$ and adapt a localized topological linking procedure from \cite{LinMinMax} to retain control on vortex locations; this is the content of Theorem \ref{thm:existence}. The heart of the matter in its proof is that the relative equilibrium $\mathbf{a} := (a_1, \cdots, a_N)$ given in the Theorem yields a critical point, also denoted $\mathbf{a}$, of a modified renormalized energy functional $\calh^{n,\omega_0} := \frac{1}{\pi} W - \omega_0 \scrj_0.$ The positive and negative eigenvalues of $D^2\calh^{n,\omega_0}(\mathbf{a})$ yield a topological linking structure in a neighborhood of $\mathbf{a}$; the construction in Prop.\ref{prop:BBH construction} transports this linking structure to the Sobolev space $H^1,$ yielding a rich collection of initial data to our gradient flow. The gradient flow structure yields upper bounds on the energy $\scre^{\omega_0}_\e$ along the flow. Control on the vortex locations during the course of the gradient flow is given by a projection lemma cf. Prop. \ref{lem: proj lemma}, which then along with the linking structure yields matching lower bounds on the $\scre^{\omega_0}_\e$ energy. The gradient flow being a homotopy, preserves the linking structure, and yields a critical point of $\scre^{\omega_0}_\e$ in the $t \to \infty$ limit.  An interesting technical ingredient of this proof is a symmetry argument which asserts that the gradient flow of $\scre^{\omega_0}_\e$ preserves the symmetry of the initial data; its proof is an elementary argument involving Fourier series, ODE uniqueness and divisibility properties of integers, cf. Claim 4.7. Theorem \ref{thm:asymptotics}, which proves the $\e \to 0$ asymptotics of the solutions constructed in Theorem \ref{thm:existence} involves a number of integral identities, and follows the general program laid out in \cite{BBH}.
\par
We start with some preparation before stating the main theorems of this section. 
\par 
\begin{lemma} \label{lem:Palais-Smale}
Let $\omega \in \bR$ and $\e > 0.$ The functional $\cale^{\omega}$ satisfies the Palais-Smale condition in the space $H^1 (\bD, \bC).$ 
\end{lemma}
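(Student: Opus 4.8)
The plan is the classical two-step scheme for verifying Palais--Smale: first show that any Palais--Smale sequence is bounded in $H^1(\bD,\bC)$ (this will follow from coercivity of $\cale^{\omega}$, not even from the vanishing of the derivative), and then use weak compactness together with the condition $(\cale^{\omega})'(u_j)\to 0$ to promote weak convergence to strong convergence in $H^1$. Throughout one uses that $\cale^{\omega}\in C^1(H^1(\bD,\bC))$, the only non-obvious point being that the quartic term is $C^1$, which rests on the two-dimensional Sobolev embedding $H^1(\bD)\hookrightarrow L^4(\bD)$. The genuine point of care is the indefinite lower order term $\frac{\omega}{2}\int_\bD k|u|^2 + \frac1m u\cdot(y^\perp\cdot\nabla)u^\perp$, which is handled in Step~1 by absorption into the quartic term and in Step~2 by exploiting that the vector field $y^\perp$ is divergence free and tangent to $\bdry$.

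\textbf{Step 1 (coercivity).} Since $|y|\le 1$ on $\bD$, Cauchy--Schwarz and Young give $\big|\int_\bD u\cdot(y^\perp\cdot\nabla)u^\perp\big|\le \|u\|_{L^2}\|\nabla u\|_{L^2}\le \tfrac14\|\nabla u\|_{L^2}^2 + C\|u\|_{L^2}^2$. Using $(1-|u|^2)^2\ge |u|^4 - 2|u|^2$ together with $\|u\|_{L^2}^2\le |\bD|^{1/2}\|u\|_{L^4}^2\le \delta\|u\|_{L^4}^4 + C_\delta$, and choosing $\delta$ small depending on $\e,\omega,k,m$, one arrives at a bound of the form
\[
\cale^{\omega}(u)\ \ge\ \tfrac14\|\nabla u\|_{L^2}^2 + \tfrac{1}{8\e^2}\|u\|_{L^4}^4 - C .
\]
Hence $\cale^{\omega}$ is bounded below and coercive on $H^1(\bD,\bC)$, and in particular any sequence $(u_j)$ with $\cale^{\omega}(u_j)$ bounded is bounded in $H^1(\bD,\bC)$.

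\textbf{Step 2 (strong convergence).} Passing to a subsequence, $u_j\rightharpoonup u$ in $H^1(\bD,\bC)$, hence $u_j\to u$ in every $L^p(\bD)$, $p<\infty$, by Rellich--Kondrachov. I would then test $(\cale^{\omega})'(u_j)\to 0$ against $u_j-u\rightharpoonup 0$ and track the four resulting terms. The Dirichlet term equals $\|\nabla u_j\|_{L^2}^2-\|\nabla u\|_{L^2}^2 + o(1)$ by weak convergence. The potential term $\frac{1}{\e^2}\int_\bD u_j(1-|u_j|^2)\cdot(u_j-u)\to 0$, since $u_j(1-|u_j|^2)$ is bounded in $L^{4/3}(\bD)$ while $u_j-u\to 0$ in $L^4(\bD)$. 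The term $\omega k\int_\bD u_j\cdot(u_j-u)\to 0$, since $u_j-u\to 0$ in $L^2(\bD)$. For the cross term one uses that the bilinear form $B(v,\varphi):=\int_\bD v\cdot(y^\perp\cdot\nabla)\varphi^\perp$ is symmetric: because $\mathrm{div}\,y^\perp=0$ and $y^\perp\cdot\nu=0$ on $\bdry$, integration by parts and the identity $a^\perp\cdot b=-a\cdot b^\perp$ give $\int_\bD v\cdot(y^\perp\cdot\nabla)\varphi^\perp=\int_\bD (y^\perp\cdot\nabla v)^\perp\cdot\varphi = B(\varphi,v)$. Consequently the cross-term contribution to $\langle(\cale^{\omega})'(u_j),u_j-u\rangle$ is a fixed multiple of $\int_\bD (y^\perp\cdot\nabla u_j)^\perp\cdot(u_j-u)$, which tends to $0$ because $(y^\perp\cdot\nabla u_j)^\perp$ is bounded in $L^2(\bD)$ and $u_j-u\to 0$ in $L^2(\bD)$. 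Collecting the four terms and using $(\cale^{\omega})'(u_j)\to 0$ yields $\|\nabla u_j\|_{L^2}\to\|\nabla u\|_{L^2}$; together with $u_j\to u$ in $L^2$ and $u_j\rightharpoonup u$ in $H^1$, this forces $u_j\to u$ strongly in $H^1(\bD,\bC)$, as required.

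\textbf{Main obstacle.} The only step that is not routine bookkeeping is the cross term in Step~2: written naively it carries a derivative on the merely weakly convergent factor $u_j-u$, and it is the symmetry of $B$ — equivalently, the fact that the tangential vector field $y^\perp$ is divergence free and tangent to $\bdry$, so that $y^\perp\cdot\nabla$ integrates by parts with no boundary term — that allows one to move the derivative onto the bounded factor $u_j$ and pass to the limit.
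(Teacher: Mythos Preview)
Your proof is correct and follows essentially the same approach as the paper's: establish an $H^1$ bound from the energy bound (coercivity), extract a weak limit via Rellich, and then use the vanishing of $(\cale^{\omega})'(u_j)$ to deduce $\|\nabla u_j\|_{L^2}\to\|\nabla u\|_{L^2}$ and hence strong convergence. The only cosmetic difference is that you test against $u_j-u$ directly, whereas the paper tests against $u_j$ and against $u$ separately and compares; your treatment of the coercivity in Step~1 and of the indefinite cross term via the symmetry of $B$ is more explicit than the paper's, which simply invokes H\"older's inequality and ``standard facts about weak convergence.''
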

We omit the proof of this Lemma. since its proof is identical to the fact that the Ginzburg Landau energy verifies the Palais Smale condition \cite{jerrardsternbergcriticalpoints}.
\par
\begin{definition} \label{rem:generating set}
Let $\{a_1,\cdots, a_N\}$ be a given set of vortices with the $k$-fold symmetry property. A subset $S \subset \{a_1,\cdots,a_N\} = \mathbf{a}$ is said to be a \textit{generating set} of $\mathbf{a}$ if, recalling the notation in \eqref{eq:multipleofset}, the disjoint union 
\begin{align*}
\bigsqcup_{j=0}^{k-1} e^{\left(\frac{2\pi j}{k}\right)} S = \mathbf{a}.
\end{align*}
Essentially, selecting a generating set for a multi-ring configuration amounts to selecting a representative from each ring. Now, let $N_R = \frac{N}{k}$ denotes the number of rings, and  $S = \{a_1, \cdots, a_{N_R}\}$ be a generating set. By considerations of symmetry, we may assume that $a_j \neq 0$ for any $j.$ For $j = 1,\cdots N_R$ let $\theta_j := \arg a_j.$ Then we can make the choice of the generating set so that $\max_{i \neq j} |\theta_i - \theta_j| \leq \frac{2\pi}{k}.$ For the rest of the paper, given $\mathbf{a} \in (\bD^*)^N,$ we will denote by $\wma \in (\bD^*)^{N_R}$ such a choice of generating set. 
\end{definition}
As a convenient abuse of notation, we will frequently use 
\begin{align} \label{eq:convention 2}
\calh^{n,\omega_0} (\wma,d) := \calh^{n,\omega_0} (\mathbf{a},d).
\end{align}
Since the functional $\calh^{n,\omega_0}$ is not affected by re-labeling of the pairs $(a_i,d_i),$ this definition is well-defined. 

Our next definition is a finite dimensional version of $\scre^{\omega_0}_\e.$  Suppose $\mathbf{b} = (b_1,\cdots, b_N) \in (\bD^*)^N$ and $d = (d_1,\cdots, d_N) \in \{\pm 1 \}^N$ with $\sum d_i = n.$ In order for $(\mathbf{b},d)$ to be a relative equilibrium of the point vortex flow, cf. \eqref{eq:def rel equilibrium} with angular speed $\frac{\omega_0}{m},$ we require that $\mathbf{b}$ is a critical point of the functional 
\begin{align}
\label{eq:calh}
\mathcal{H}^{n,\omega_0} (\mathbf{b},d) := \frac{1}{\pi} W(\mathbf{b},d) - \frac{\omega_0}{m} \scrj_0(\mathbf{b},d), 
\end{align}
with $\scrj_0$ defined as in Eq. \eqref{eq:fdmomentum}. We note that when $\omega_0 \neq 0,$ the time-period of rotation of these periodic solutions to (PVF) is given by 
\begin{align} \label{eq:period formula}
T = \frac{2\pi m}{|\omega_0|}. 
\end{align}
\begin{remark}
When the degree $d = (d_1, \cdots, d_N)$ has been fixed, we will frequently \textit{drop} the dependence on $d$ from our notation, but continue to denote by $\calh^{n,\omega_0}(\mathbf{b}) = \calh^{n,\omega_0}(\mathbf{b},d)$ as a convenient abuse of notation.  
\end{remark}

Throughout the course of the proofs of Theorems \ref{thm:existence} and \ref{thm:asymptotics} we make the following
\par 
\begin{em}\textbf{Assumption A:}\\
By rotational invariance of the functional $\calh^{n,\omega_0},$ it follows that for every $\theta \in \bR$ the point $(e^{i\theta}\mathbf{a} ,d)$ is critical for the functional $\calh^{n,\omega_0}.$ We introduce the smooth curve in $\bR^{2N_R}$ defined by
\begin{align*}
\call_{\wma} := \{ e^{i\theta}\wma : 0 \leq \theta < 2\pi \}.
\end{align*}
It is then clear that a tangent vector to $\call_{\wma}$ is a null-vector for $D^2 \calh^{n,\omega_0}.$ We will assume that this is the only null-direction.
\end{em}
\par
This assumption can be substantially relaxed; we explain this along with necessary modifications to the proofs of Theorems \ref{thm:existence}-\ref{thm:asymptotics} in Remark \ref{rem:modification} below. However, in an attempt to avoid clutter, we present the proofs of our Theorems under Assumption A. 
\par
\begin{theorem} \label{thm:existence}
Let $(\mathbf{a}(t),d)$ be a time periodic solution to the point vortex flow \eqref{pvfode} with time period $ \frac{2\pi m}{\omega_0}$,cf. Eq. \eqref{eq:period formula}. Denote by $(\mathbf{a},d)$ the corresponding critical point to the functional $\calh^{n,\omega_0}$ defined in \eqref{eq:calh}. Also let $\mathbf{a}$ satisfy Assumption A above. Then there exists $\e_0 > 0$ depending only on $n$ and the critical point $\ma$ such that the following holds: for all $0 < \e < \e_0$ there exists a $\frac{2\pi m}{\omega_0}$-time periodic vortex solution $u_\e(x,t) := R(-k \omega_0 t) v_\e(R\big(\frac{\omega_0}{m}t\big) x)$ of \eqref{eq:GP} satisfying 
\begin{align} \label{eq:symmetry prop of solution}
u_\e(x;t) = u_\e(r,\theta;t) = u_\e\left(r,\theta + \frac{2\pi}{k};t\right)
\end{align}
for all $r \in (0,1], \theta \in [0,2\pi)$ and $t \in \bR.$ Furthermore, the function $v_\e = v_\e(y) : \bD \to \bR^2$ is a smooth critical point of the functional $\cale^{\omega_0}$ defined in \eqref{scripte}, has the $k-$fold symmetry property, and satisfies the energy estimate 
\begin{align} \label{eq:energy close}
\left| \cale^{\omega_0} (v_\e) - N \left( \pi \log \frac{1}{\e} + \gamma \right) - \pi\calh^{n,\omega_0} (\ma) \right| = o_\e(1), \hspace{1cm} \mbox{ as } \e \to 0, 
\end{align}
where $\gamma \in \bR$ is a universal constant defined below in \eqref{eq:gamma}. 
\end{theorem}
\begin{remark}
We will prove that the time periodic solutions constructed in Theorem \ref{thm:existence} do indeed follow the given solution of (PVF) in Theorem \ref{thm:asymptotics}. 
\end{remark}
\begin{proof}[Proof of Theorem \ref{thm:existence}]
The proof of this theorem is organized in a sequence of steps. Our goal in much of the proof is to build a critical point of $\cale^{\omega_0}$ introduced in \eqref{scripte}. \\
\textbf{Step 1:} We begin by localizing near a critical point of $\calh^{n,\omega_0}.$ Let $\ma := (a_1, \cdots , a_N)$ be a critical point of $\calh^{n,\omega_0}$ corresponding to the given periodic solution of \eqref{pvfode} with degrees $d_1,\cdots, d_N$. Then, as remarked before, for each $\theta \in \bR,$ we have the invariance
\begin{align*}
\calh^{n,\omega_0} (a_1,\cdots, a_N, d_1,\cdots, d_N) = \calh^{n,\omega_0} (a_1e^{i\theta},\cdots, a_N e^{i\theta}, d_1,\cdots, d_N).
\end{align*}
Let $k$ be a divisor of $n,$ and suppose that the given configuration of vortices has $k-$fold symmetry. Let $\widehat{\ma} = (a_1, \cdots, a_{N_R})$ be a generating set for this critical point, chosen as in Definition \ref{rem:generating set}. \par Taking into account the rotational invariance of the functional $\calh^{n,\omega_0}$ described above, recall the set $\call_{\wma}$ defined in Assumption A above. Topologically, this set is a circle. Geometrically, $\call_{\wma}$ is a simple closed smooth curve in the \textit{phase space} $(\bD^*)^{N_R}.$ Let us emphasize that $\call_{\wma}$ is \textit{not} the product of circles of radii $|a_1|, \cdots, |a_{N_R}|$ in $\bD,$ with $\bD$ being the \textit{physical space} for the problem \eqref{pvfode}. Finally, to keep notation consistent, points in $\call_{\wma}$ will be denoted with a hat, for instance $\widehat{\mathbf{p}}.$ 
\par 
It then follows that the circle $\call_{\wma}$ is critical for $\calh^{n,\omega_0}$ in the sense that for any $\widehat{\mathbf{b}} \in \call_{\wma},$ the corresponding point $\mathbf{b} \in \bC^N$ is critical for $\calh^{n,\omega_0}.$ It follows that for each $\widehat{\mathbf{b}} \in \call_{\wma},$ the real Hessian $D^2\calh^{n,\omega_0}(\widehat{\mathbf{b}},d)$ identified with a \textit{real} $2N_R \times 2N_R$ matrix has $2N_R$ real eigenvalues and corresponding eigenvectors $(e_1)_{\wmb},\cdots, (e_{2N_R})_{\wmb}$ that are mutually ortho-normal. When there is no confusion, we will drop the subscript $\wmb$ from the eigenvectors in the interest of clarity. 
\par 
Given a point $\widehat{\mathbf{b}} \in \call_{\wma},$ owing to the degeneracy discussed above, cf. Assumption A, the unit tangent vector to $\call_{\wma}$ at $\widehat{\mathbf{b}}$ is a null-eigenvector of $D^2 \calh^{n,\omega_0}.$ Fixing an orientation for the curve $\call_{\wma},$ we may, with no loss of generality, assume that $(e_1)_{\wmb}$ is the unit tangent vector to $\call_{\wma}$ that is consistent with the orientation. For any $\delta > 0,$ we introduce the $\delta-$neighborhood of $\call_{\wma}$ by 
\begin{align*}
\call_{\wma}^\delta := \{ \wmp \in \bR^{2N_R}: \dist(\wmp, \call_{\wma}) < \delta \}.
\end{align*}
We will be interested in sets $\call_{\wma}^\delta$ for suitably small $\delta > 0,$ and we will make this precise shortly. However, before doing so, let us remark that for such $\delta,$ we will continue to denote points of the set $\call_{\wma}^\delta$ \textit{with} hats to remain consistent with \eqref{eq:convention 2}.
\par 
We will be interested in sets $\call_{\wma}^\delta$ where $\delta$ is sufficiently small so that the following hold, see figure 3 below: 
\begin{itemize}
\item The closure $\overline{\call_{\wma}^\delta}$ is contained strictly in $(\bD^*)^{N_R}.$ 
\item The neighborhood $\call_{\wma}^\delta$ admits the following system of local coordinates $(y_1, \cdots, y_{2N_R})$ defined using $(e_1, \cdots, e_{2N_R})$ introduced above: given a point $\wmp \in \overline{\call_{\wma}^\delta},$ for $\delta > 0$ sufficiently small, there exists a \textit{unique} point $\wma_\theta := e^{i\theta}\wma \in \call_{\wma}$ which is closest to $\wmp.$ Set $y_1(\wmp) = \theta.$ Next, denote by $\pi_\theta$ the $2N_R -1 $ dimensional plane passing through $\wma_\theta$ and normal $(e_1)_{\wma_\theta}.$ Then for $\delta$ as above, each point $\widehat{\mathbf{q}} \in \call_{\wma}^\delta$ verifying $\widehat{\mathbf{q}} \in \pi_\theta$ has first component $y_1(\widehat{\mathbf{q}}) = \theta.$ Write $\pi_\theta^\delta := \pi_\theta \cap \call_{\wma}^\delta$ for the $2N_R - 1$ dimensional disc in the plane $\pi_\theta$ of radius $\delta.$ Then naturally, $\pi_\theta^\delta $ has an orthonormal system of coordinates given by $(e_2)_{\wma_\theta}, \cdots, (e_{2N_R - 1})_{\wma_\theta}.$ Furthermore, since for $\delta > 0,$ any point in $\widehat{\mathbf{q}} \in \call_{\wma}^\delta$ belongs to one and only one $\pi_\theta^\delta,$ we will write $\widehat{\mathbf{q}} = (y_1, y_2, \cdots, y_{2N_R})$ where $y_1(\widehat{\mathbf{q}}) = \theta$ is as defined before and the coordinates $(y_2, \cdots, y_{2N_R})$ are the coordinates of $\widehat{\mathbf{q}} \in \pi_\theta^\delta$ in the basis described above. Note that by compactness and smoothness of the curve $\call_{\wma}^\delta,$ this system of coordinates is well-defined for all $\delta$ sufficiently small, say $0 < \delta \leq \delta_*.$ See Figure 3. 
\end{itemize}
\begin{figure} \centering
\includegraphics[scale=.25]{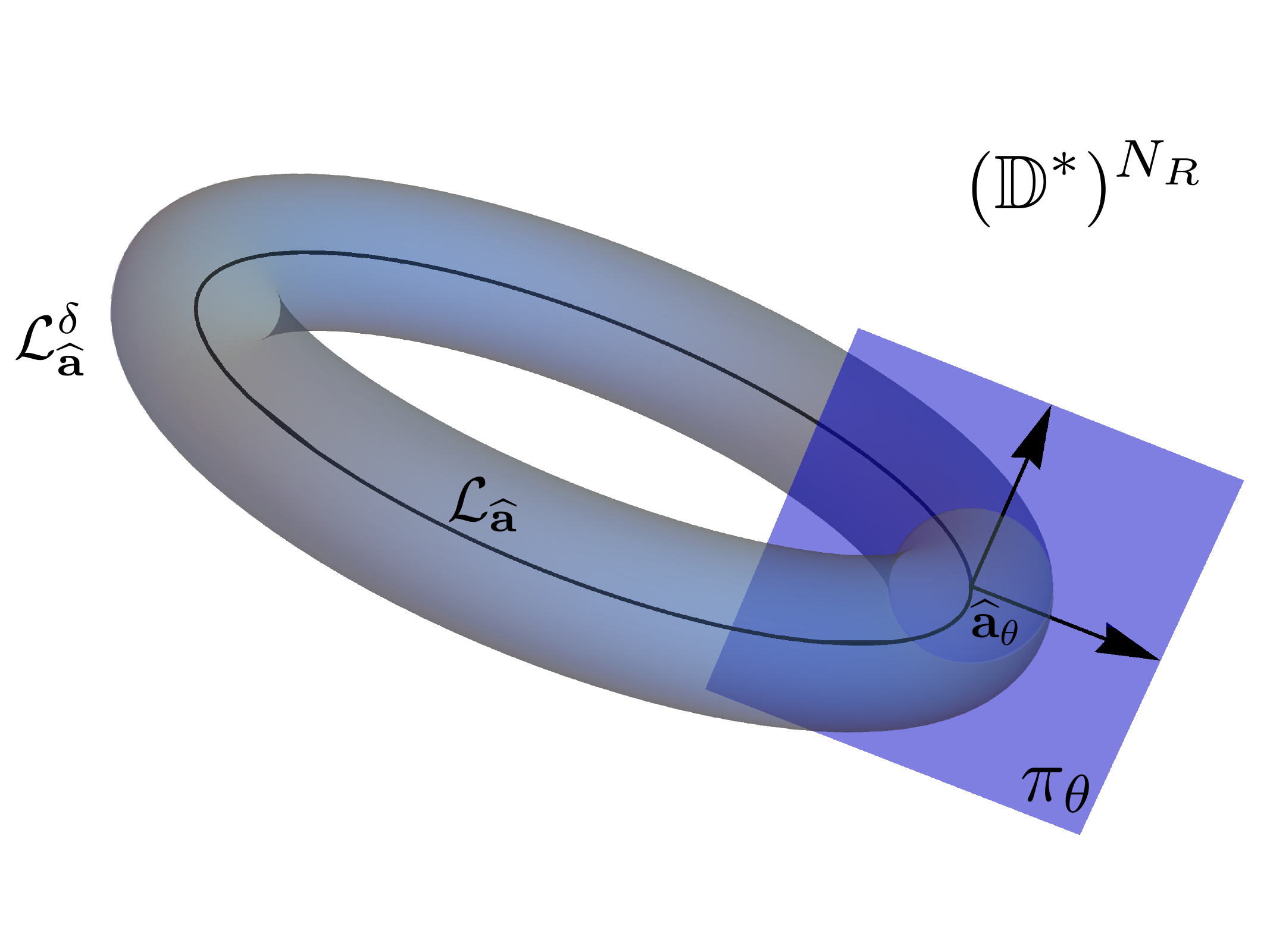}
\caption{A Schematic Picture of a Tubular Neighborhood $\call_{\wma}^\delta \subset (\bD^*)^{N_R}$. The curve along the center line denotes the critical curve $\call_{\wma}$. Also indicated is the orthogonal plane $\pi_\theta,$ at $\wma_\theta \in \call_{\wma}.$ Its intersection $\pi_\theta^\delta$ with $\call_{\wma}^\delta$ is not labeled in the figure. }
\end{figure}
\par  
Let $d$ be fixed as in the statement of the theorem so that we may frequently suppress it from our notations. Given local coordinates as in the preceding paragraph, by Assumption A there exist real numbers $\lambda_1 = 0$ and $\lambda_2, \cdots, \lambda_{2N_R} > 0,$ and a positive integer $1 \leq S \leq 2N_R$ such that for any $\wmp = (y_1,\cdots, y_{2N_R}) \in \call_{\wma}^\delta,$ we may write 
\begin{align}
\label{eq:hessian}
\calh^{n,\omega_0}(\wmp) - \calh^{n,\omega_0}(\wma) = -\sum_{i=1}^S \lambda_i y_i^2 + \sum_{i=S+1}^{2N_R} \lambda_i y_i^2 + o(\dist(\wmp,\call_{\wma})^2).
\end{align}
The case $S=1$ corresponds to a degenerate local minimum of the functional $\calh^{n,\omega_0}$ whereas the case $S=2N_R$ corresponds to a degenerate local maximum. Intermediate values of $S$ correspond to degenerate saddle points of $\calh^{n,\omega_0}$ in $\call_{\wma}^\delta.$ We will deal with the three cases individually. Given the coordinates described above, let $\Pi_1$ denote projection of a point to its first coordinate. \par
\textbf{Step 2:} In this step, we introduce the linking structure that will be used in case $\wma$ is a saddle point, i.e. $2 \leq S \leq 2N_R-1.$ Given $\theta \in [0,2\pi),$ set 
\begin{align*}
B_\theta := \{\wmy \in \call_{\wma}^\delta : \Pi_1(\wmy) = \theta \}. 
\end{align*}
We remark that the sets $B_\theta$ and the set $\pi^\delta_\theta$ from Step 1 above are the same, but we prefer to use the more suggestive notation $B_\theta$ which also suppresses the dependence on $\delta.$ Now, write 
\begin{align*}
D_\theta &:= \{\wmy = (y_1,y_2,\cdots, y_{2N_R}) \in B_\theta: y_{S+1} = y_{S+2} = \cdots = y_{2N_R} = 0\},\\
D_\theta^\perp &:= \{\wmy = (y_1,y_2,\cdots, y_{2N_R}) \in B_\theta: y_2 = \cdots y_S = 0\}.
\end{align*}
Set 
\begin{align} \label{eq:delta1}
\delta_1 := \min \left\{ \min_{\wmy \in \partial D_\theta} \calh^{n,\omega_0} (e^{i\theta}\wma) - \calh^{n,\omega_0}(\wmy), \min_{\wmy \in \partial D_\theta^\perp} \calh^{n,\omega_0}(\wmy) - \calh^{n,\omega_0} (e^{i\theta}\wma) \right\}
\end{align}
By possibly reducing the value of $\delta_* >0,$ we may assume that $\delta_1 > 0.$ \\
\begin{claim} \label{lem:fdlinking}
The sets $\partial D_\theta$ and $D_\theta^\perp$ are homotopically linked in $B_\theta^\delta.$ 
\end{claim}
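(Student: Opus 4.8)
The plan is to exhibit the standard finite-dimensional linking of a sphere with a complementary-dimensional plane, localized inside the ball $B_\theta$. Recall that $B_\theta$ is a $(2N_R-1)$-dimensional disc with orthonormal coordinates $(y_2,\dots,y_{2N_R})$, and that $D_\theta$ is the $(S-1)$-dimensional sub-disc spanned by $y_2,\dots,y_S$ while $D_\theta^\perp$ is the $(2N_R-S)$-dimensional sub-disc spanned by $y_{S+1},\dots,y_{2N_R}$. Thus $\partial D_\theta$ is an $(S-2)$-sphere sitting inside the coordinate subspace $\{y_{S+1}=\dots=y_{2N_R}=0\}$, and $D_\theta^\perp$ is a disc whose relative boundary lies in the complementary subspace. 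The two subspaces meet only at the center point $e(\theta)\wma$, which lies on neither $\partial D_\theta$ nor $D_\theta^\perp$ once $\delta$ is small (this is exactly where the positivity $\delta_1>0$ from \eqref{eq:delta1} is used: the center is a strict interior point). So the configuration is geometrically the textbook linking pair.

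The key steps, in order, would be: first, recall the definition of homotopic linking in $B_\theta$ — $\partial D_\theta$ and $D_\theta^\perp$ are linked if every continuous $h:D_\theta\to B_\theta$ with $h|_{\partial D_\theta}=\mathrm{id}$ satisfies $h(D_\theta)\cap D_\theta^\perp\neq\emptyset$. Second, set up the degree-theoretic obstruction: using the splitting of coordinates, let $P:B_\theta\to\bR^{S-1}$ be the orthogonal projection onto the span of $(e_2,\dots,e_S)$, so that $D_\theta^\perp = P^{-1}(0)$ (intersected with $B_\theta$) and $\partial D_\theta$ is a sphere in the affine slice $P^{-1}(\bR^{S-1})\cap\{y_{>S}=0\}$ on which $P$ restricts to a map of nonzero degree onto its image sphere. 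Third, given any admissible $h$, consider the composite $P\circ h : D_\theta\to\bR^{S-1}$; on $\partial D_\theta$ it agrees with $P|_{\partial D_\theta}$, which (as a map from the $(S-2)$-sphere $\partial D_\theta$ into $\bR^{S-1}\setminus\{0\}$ after suitable normalization, or better, as a map $D_\theta\to\bR^{S-1}$ with prescribed nonzero-degree boundary) forces, by the Brouwer degree / no-retraction argument, the existence of $x\in D_\theta$ with $P(h(x))=0$, i.e. $h(x)\in P^{-1}(0)\cap B_\theta = D_\theta^\perp$. Fourth, conclude that $h(D_\theta)\cap D_\theta^\perp\neq\emptyset$, which is the claimed linking. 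One should double-check the boundary and dimension bookkeeping (that $\dim D_\theta + \dim D_\theta^\perp = \dim B_\theta + 1$, i.e. $(S-1)+(2N_R-S) = 2N_R-1$, so the linking is dimensionally consistent), and that the case $S=2N_R-1$ or $S=2$ still makes sense (the spheres/discs degenerate gracefully to $S^0$ or a point, and the degree argument still applies).

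The main obstacle is not conceptual but bookkeeping: one must be careful that $P\circ h$ restricted to $\partial D_\theta$ really does have the correct nonzero degree as a map into a punctured Euclidean space, which requires knowing that $h(\partial D_\theta)=\partial D_\theta$ avoids $D_\theta^\perp$ — this is automatic since $\partial D_\theta\subset\{y_{S+1}=\dots=y_{2N_R}=0\}$ and $D_\theta^\perp\cap\{y_{S+1}=\dots=y_{2N_R}=0\}=\{e(\theta)\wma\}$, while $\partial D_\theta$ is the sphere of radius $\delta$ (or whatever normalizing radius the coordinates carry) around $e(\theta)\wma$ and hence misses the center. So in fact the whole argument reduces to the standard fact, provable by Brouwer degree, that the inclusion $S^{S-2}\hookrightarrow \bR^{S-1}\setminus\{0\}$ does not extend continuously over $D^{S-1}$; this is exactly the content one needs, and it is classical (see, e.g., \cite{Struwevarmethods}). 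I would state it as a short self-contained paragraph rather than citing, given its elementary nature.
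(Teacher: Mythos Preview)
Your proposal is correct and follows essentially the same approach as the paper: both use the orthogonal projection $P$ (which the paper calls $\Pi_\theta$) from $B_\theta$ onto the $D_\theta$-coordinates, reduce the intersection condition $h(D_\theta)\cap D_\theta^\perp\neq\emptyset$ to finding a zero of $P\circ h$, and conclude via Brouwer degree. The paper makes the homotopy invariance step explicit by writing down the straight-line homotopy $h_t=(1-t)\,\mathrm{id}+t\,h$, whereas you invoke the equivalent no-retraction formulation; these are the same argument. One small correction: the disjointness $\partial D_\theta\cap D_\theta^\perp=\emptyset$ is purely a consequence of the coordinate definitions (the only common point of the two subspaces is the center, which lies at distance $0<\delta$ from $\partial D_\theta$), not of the positivity of $\delta_1$ from \eqref{eq:delta1}, which is used later in the energy comparison of Step~6.
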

\begin{proof}[Proof of Claim \ref{lem:fdlinking}]We notice that $\partial D_\theta \cap D_\theta^\perp = \emptyset.$ Let $\Pi_\theta$ denote the projection map from $B_\theta$ onto $D_\theta.$ We must show that for any $h \in C(D_\theta,B_\theta)$ that satisfies $h|_{\partial D_\theta} = id|_{\partial D_\theta}$ one has that $h(D_\theta) \cap D_\theta^\perp \neq \emptyset.$ Note, by the definition of our sets $\partial D_\theta$ and $D_\theta^\perp$, this is equivalent to solving the equation 
\begin{align*}
\Pi_\theta \circ h(d) = 0, \hspace{1cm} d \in D_\theta.
\end{align*}
To achieve this, it suffices to show that 
\begin{align*}
\deg( \Pi_\theta \circ h, \partial D_\theta,0) \mbox{ is well defined and is different from } 0.
\end{align*}
 Define a one parameter family of maps $h_t : \overline{D_\theta} \to B_\theta$ by 
\begin{align*}
h_t = (1-t)id + t h.
\end{align*}
It follows that $h_t|_{\partial D_\theta} = id|_{\partial D_\theta}$ for every $t \in [0,1].$ Consequently, 
\begin{align*}
\deg( \Pi_\theta \circ h_t, \partial D_\theta,0) \mbox{ is well defined }.
\end{align*}
By homotopy invariance of the Brouwer degree, the proof of the Claim follows since $\deg(id,\partial D_\theta, 0) = 1.$ 
\end{proof}
In fact, defining the sets 
\begin{align*}
D &:= \{ \wmp = (y_1, \cdots, y_{2N_R}) \in \call_{\wma}^\delta: y_{S+1} = \cdots = y_{2N_R} = 0, \dist(\wmp , \call_{\wma}) < \delta \}, \\
\partial D &:= \{ \wmp = (y_1, \cdots, y_{2N_R}) \in \call_{\wma}^\delta: y_{S+1} = \cdots = y_{2N_R} = 0, \dist(\wmp , \call_{\wma}) = \delta \},\\
D^\perp &:= \{ \wmp = (y_1,\cdots, y_{2N_R}) \in \call_{\wma}^\delta : y_2 = \cdots = y_S = 0  \},
\end{align*}
then the proof of Claim \ref{lem:fdlinking} shows that $\partial D$ and $D^\perp$ are homotopically linked in $\call_{\wma}^\delta.$ \par 
\textbf{Step 3:} In this step, we construct a gradient flow that will be used to continuously deform the linked sets from Step 2, in a manner that preserves the linking structure. Our desired critical point $v_\e$, as in the statement of the Theorem, will arise as the large time limit of this flow. Before describing the flow, we construct well prepared initial data, and embed the linking structure from the preceding step into the Sobolev space $H^1.$ Given $\wmp \in \call_{\wma}^\delta,$ let $\mathbf{p} \in (\bD^*)^N$ be the associated symmetric configuration of vortices as explained in Definition \ref{rem:generating set}. Then let $w^\e_{\wmp} \in H^1_{g_n}(\bD, \bC)$ denote the map constructed in Proposition \ref{prop:BBH construction} satisfying the following properties: 
\begin{itemize}
\item One has $w^\e_{\wmp}  \in C(\overline{\bD}) \cap H^1_{g_n}(\bD, \bC).$ 
\item One has $|w^\e_{\wmp}(x)| \leq 1$ for all $x \in \bD$ and $w^\e_{\wmp}(x) = 0$ if and only if $x = p_j, j = 1, \cdots , N.$ 
\item The Ginzburg-Landau energy $E_\e (w^\e_{\wmp})$ satisfies 
\begin{align}
\label{eq:log bound init data}
E_\e (w^\e_{\wmp} ) = N \left( \pi \log \frac{1}{\e} + \gamma\right) + W(\mathbf{p},d) + o_\e(1) 
\end{align}
as $\e \to 0,$ where $\gamma \in \bR$ is a universal constant that we recall below.  
\item The momentum $\scrj(w^\e_{\wmp}) $ satisfies 
\begin{align}
\label{eq:momentum const}
\scrj(w^\e_{\wmp}) = \frac{\pi}{m}\scrj_0(\mathbf{p},d) + o_\e(1)
\end{align}  
as $\e \to 0.$ 
\item Finally one has the symmetry condition 
\begin{align} \label{eq:init data symm}
w^\e_{\wmp}\left(r, \theta + \frac{2\pi}{k}\right) = w^\e_{\wmp}(r,\theta). 
\end{align}
\end{itemize} 
We briefly digress to recall the definition and some properties of the universal constant $\gamma$ from \eqref{eq:log bound init data}.
\par
Given $\e > 0$ and $R > 0$ recall the quantity $I(\e,R)$ defined in \cite{BBH}, Chapter III: 
\begin{align*}
I(\e,R) := \min_{u \in H^1_g(B_R(0))} \left\{ \int_{B_R(0)} \frac{|\nabla u|^2}{2} + \frac{(1-|u|^2)^2}{4\e^2}\,dx: g(x) = \frac{x}{|x|}  \right\}.
\end{align*}
Furthermore set $I(t):= I(t,1)$ for $t > 0.$ Then by \cite{BBH} Lemma III.1, the function $t \mapsto I(t) + \pi \log t$ is non-decreasing for $t \in(0,1).$ Finally, recall from \cite{BBH}, Lemma IX.I that the limit 
\begin{align} \label{eq:gamma}
\gamma := \lim_{t \to 0^+} (I(t) + \pi \log t)
\end{align}
exists finitely.  \par 
We now return to the main goal of Step 3, namely construction of a gradient flow. For any $\wmp \in \bD,$ let $u^\e = u^\e(x,t;\wmp)$ denote the unique smooth solution to the initial boundary value problem 
\begin{align}
\label{eq:heat eq}
\frac{\partial u}{\partial t} = \Delta u + \frac{u(1-|u|^2)}{\e^2} &- \omega_0 \left( k u + \frac{1}{m}(y^\perp \cdot \nabla )u^\perp \right), \hspace{.8cm} &x \in \bD, t \in \bR_+,\\ \label{eq:heat init data}
u(x,0;\wmp) &= w^\e_{\wmp}(x), & x \in \bD,\\ \label{eq:heat bc}
u(x,t;\wmp) &= g_n(x), & x \in \bdry, t \in \bR_+.
\end{align}
Let us point out that the system \eqref{eq:heat eq}-\eqref{eq:heat bc} is the gradient flow of $\cale^{\omega_0}.$ Global existence and regularity for this system follows from a parabolic analogue of Lemma \ref{lem:maximum princple} and standard theory for reaction diffusion systems, see for instance \cite{Smoller}, pg. 210. Furthermore, for each fixed time $t \geq 0,$  the map $\wmp \mapsto u_\e(\cdot,t; \wmp) \in H^1_{g_n}(\bD, \bC)$ is continuous, where the target space is given the norm topology. We also have the following Claim which asserts that the heat flow described above preserves symmetry of the initial configuration of vortices. 
\begin{claim} \label{lem:symmetry lemma}
Let $\wmp \in D,$ and $\e > 0$ be fixed. Then the solution $u^\e$ to the system \eqref{eq:heat eq}-\eqref{eq:heat bc} satisfies 
\begin{align}
u^\e\left(r,\theta + \frac{2\pi}{k},t;\wmp\right) = u^\e( r, \theta,t;\wmp),
\end{align}
for every $r \in (0,1], \theta \in [0,2\pi),$ and $t \geq 0.$ 
\end{claim}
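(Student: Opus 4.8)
\emph{Proof strategy for Claim \ref{lem:symmetry lemma}.} The plan is to exploit uniqueness for the parabolic initial--boundary value problem \eqref{eq:heat eq}--\eqref{eq:heat bc} together with the rotational invariance of the differential operator on its right-hand side. Denote by $\rho$ the operator of rotating the spatial argument by $\frac{2\pi}{k}$, so that in polar coordinates $(\rho f)(r,\theta) = f(r,\theta + \frac{2\pi}{k})$, and set $\tilde u(r,\theta,t) := (\rho u^\e)(r,\theta,t) = u^\e(r,\theta + \frac{2\pi}{k},t;\wmp)$. The first step is to verify that $\tilde u$ solves the very same system \eqref{eq:heat eq}--\eqref{eq:heat bc}. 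The Laplacian commutes with $\rho$ since $\Delta$ is invariant under every Euclidean rotation; the pointwise nonlinearity $\frac{u(1-|u|^2)}{\e^2}$ and the term $ku$ commute with $\rho$ because they act on the value of $u$ at a point, while $\rho$ merely relocates the base point. The only term requiring a short computation is the transport term $(y^\perp \cdot \nabla)u^\perp$.

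For that term the key observation is that $y^\perp \cdot \nabla = \partial_\theta$, the derivative in the polar angle: with $y = (r\cos\theta, r\sin\theta)$ one has $y^\perp = (-r\sin\theta, r\cos\theta)$, hence $y^\perp \cdot \nabla f = -r\sin\theta\,\partial_{y_1} f + r\cos\theta\,\partial_{y_2} f = \partial_\theta f$. Since $\partial_\theta$ manifestly commutes with the shift $\theta \mapsto \theta + \frac{2\pi}{k}$, and since $u \mapsto u^\perp$ (multiplication by $i$) is a pointwise operation on the target that also commutes with $\rho$, we obtain $(y^\perp \cdot \nabla)(\rho u)^\perp = \rho\big((y^\perp \cdot \nabla)u^\perp\big)$. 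Combining this with $\partial_t \circ \rho = \rho \circ \partial_t$, the entire right-hand side of \eqref{eq:heat eq} commutes with $\rho$, so $\tilde u$ satisfies \eqref{eq:heat eq}. The initial condition \eqref{eq:heat init data} is inherited from the symmetry \eqref{eq:init data symm} of $w^\e_{\wmp}$, namely $\tilde u(\cdot,0;\wmp) = \rho w^\e_{\wmp} = w^\e_{\wmp}$; and the Dirichlet datum \eqref{eq:heat bc} is preserved because $g_n(\theta + \frac{2\pi}{k}) = e^{in(\theta + 2\pi/k)} = e^{2\pi i n/k} e^{in\theta} = e^{in\theta} = g_n(\theta)$, where $k \mid n$ is used crucially.

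It then remains to invoke uniqueness for \eqref{eq:heat eq}--\eqref{eq:heat bc}: a parabolic analogue of Lemma \ref{lem:maximum princple} provides the a priori bounds, and standard theory for reaction--diffusion systems (see \cite{Smoller}) gives uniqueness of the smooth solution. Since $u^\e$ and $\tilde u$ are both smooth solutions with the same data, $\tilde u \equiv u^\e$, which is precisely the asserted identity $u^\e(r,\theta + \frac{2\pi}{k},t;\wmp) = u^\e(r,\theta,t;\wmp)$ for all $r \in (0,1]$, $\theta \in [0,2\pi)$, $t \ge 0$. The only step that is not purely bookkeeping is the commutation of the transport term with $\rho$, but this becomes transparent once one recognizes $y^\perp \cdot \nabla$ as $\partial_\theta$; I would also note in passing that $\rho$ is an $H^1$-isometry preserving smoothness, so no regularity issue arises in applying the uniqueness statement.
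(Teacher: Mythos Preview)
Your proof is correct and in fact cleaner than the paper's own argument, though both ultimately rest on uniqueness for the parabolic system. The paper proceeds by expanding $u^\e$ and $w^\e_{\wmp}$ in Fourier series in the angular variable $\theta$, writing down the coupled system of PDEs for the Fourier coefficients $c_j(r,t)$, and then arguing that the coefficients $c_j$ with $k \nmid j$ satisfy a problem with zero initial and boundary data; the nonlinear coupling term $\sum_{p-q+m=j} c_p \overline{c}_q c_m$ is harmless because if $k \nmid j$ then at least one of $p,q,m$ is not a multiple of $k$, so the zero solution for these modes is consistent and uniqueness forces $c_j \equiv 0$.

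Your route---showing that the rotated solution $\tilde u = \rho u^\e$ satisfies the identical initial--boundary value problem and then invoking uniqueness directly---is the standard symmetry argument and avoids the bookkeeping of the Fourier-coefficient system entirely. The essential computation you isolate, namely that $y^\perp \cdot \nabla = \partial_\theta$ and hence commutes with the angular shift, is exactly what makes the whole PDE rotationally equivariant; the paper's Fourier approach encodes this same fact mode-by-mode. What the Fourier decomposition buys is an explicit description of which modes are present (only multiples of $k$), which might be useful elsewhere, but for the bare symmetry claim your argument is both shorter and more transparent.
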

\begin{proof}[Proof of Claim \ref{lem:symmetry lemma}] 
We start by noting that a function $U \in H^1 (\bD, \bC)$ which satisfies a symmetry property $U(r, \theta + \frac{2\pi}{k}) = U(r, \theta)$ has a Fourier series development 
\begin{align*}
U(r,\theta) \sim \sum_{j \in \bZ, k |j } c_j (r) e^{ij\theta},
\end{align*}
where we say $k |j$ if $k$ divides $j.$ In other words, Fourier coefficients corresponding to frequencies that are not multiples of $k$ are absent. In the case at hand, we can develop the solution in Fourier series
\begin{align} \label{eq:sol fourier}
u_\e(r,\theta,t;\wmp) = \sum_{j \in \bZ} c_j(r,t) e^{ij\theta},
\end{align}
where we suppress the dependence of the Fourier coefficients on $\e$ and on $\wmp.$ Also, the initial condition $w^\e_{\wmp}$ satisfies symmetry property \eqref{eq:init data symm}. Consequently it admits a Fourier series development 
\begin{align} \label{eq:data fourier}
w^\e_{\wmp}(r,\theta) = \sum_{j \in \bZ, k| j } \mathfrak{c}_j(r)e^{ij\theta}. 
\end{align}
For convenience, set $\mathfrak{c}_j = 0$ if $j$ is not a multiple of $k.$ Since $u_\e$ satisfies the parabolic PDE \eqref{eq:heat eq}, we get the following system of partial differential equations for the Fourier coefficients. 
\begin{align}
\label{eq:PDE Fourier}
\frac{\partial c_j(r,t)}{\partial t} &= \frac{\partial^2 c_j}{\partial r^2} + \frac{1}{r}\frac{\partial c_j}{\partial r} - \frac{j^2}{r^2} c_j + \frac{c_j}{\e^2} - \frac{1}{\e^2}\sum_{p,q,m \in \bZ: p-q + m = j} c_p \overline{c}_q c_m - \omega_0\left(k  - \frac{j}{m}  \right)c_j(r), \hspace{1cm} &0 < r < 1, t \in \bR_+, \\ \notag
c_j(1,t) &= \delta_{jn} & t \in \bR_+, \\ \notag
c_j(r,0) &= \mathfrak{c}_j(r), & 0 < r \leq 1. 
\end{align}
The above system holds for each $j \in \bZ,$ and as usual $\delta_{jn}$ denotes Kronecker's delta symbol. When $j$ is not a multiple of $k,$ one has $0$ boundary and initial conditions for $c_j$; furthermore for such $j$, if $p - q + m = j$ for integers $p,q,m$ then $k$ does not divide at least one of $p,q$ or $m.$ Consequently, by uniqueness $c_j \equiv 0$ is the only solution for $j$ that are not a multiple of $k.$ This completes the proof of Claim \ref{lem:symmetry lemma} and Step 3. 
\end{proof}
\textbf{Step 4:} In this step we deal with the case that $\wma$ is a degenerate local minimum of $\calh^{n,\omega_0},$ i.e. $S=1$ in \eqref{eq:hessian}. Note first that in this case, necessarily the degrees of the vortices are all $+1$ and so $N = n >0:$ this is because, in the presence of a pair vortices of opposite degrees $\pm 1,$ any small change in vortex locations that reduces the distance between this pair of vortices keeping the other vortices fixed, reduces the energy $\calh^{n,\omega_0}.$ 
\par  We consider the gradient flow system \eqref{eq:heat eq}-\eqref{eq:heat bc} with initial data $w^\e(x) := w^\e_{\wma}(x)$ and denote the resulting solution by $u^\e(x,t).$ In particular, note that for the sake of simplicity, throughout Step 4, we do not indicate the dependence of $u^\e(x,t;\wma)$ on $\wma.$ Set 
\begin{align*}
\alpha := \frac{1}{4n +4} \min\{ \delta_*, |a_j - a_i|, i \neq j \in 1, \cdots n \}. 
\end{align*} 
Recall here that $\delta_* > 0$ was introduced in Step 2 above. Set
 \begin{align*}
 \calk_{\ma} := \{e^{i\theta} \ma: 0 \leq \theta < 2\pi\},
 \end{align*} 
and $\calk_{\ma}^\alpha$ to be the $\alpha-$neighborhood of $\calk_{\ma}$ defined similar to $\call_{\wma}^\delta$ from Step 1. We remark that we are using $\mathcal{K}$ for neighborhood sets in $({\bD}^*)^N$ and $\mathcal{L}$ for neighborhood sets in $(\bD^*)^{N_R}.$ The main claim of this step is: \par 
\begin{claim} \label{claim:close}
Define the set 
\begin{align*}
G_\e(t) := \left\{x \in \bD : |u_\e(x,t)| \leq \frac{1}{2} \right\}.
\end{align*}
Then for any $\e >0$ sufficiently small and any $t \geq 0,$ if $x_\e \in G_\e(t),$ then 
\begin{align} \label{eq:close to where we started from}
\min_{j=1,\cdots,n} \big[\min_{\theta \in [0,2\pi)} \dist\left(x_\e, a_j e^{i\theta}\right)\big] < \alpha. 
\end{align}
\end{claim}
\begin{proof}[Sketch of the Proof of Claim \ref{claim:close}]
The proof of this Claim follows very closely the proof of Theorem A of \cite{LinAIHP}. We recall the main ideas for the convenience of the reader. Suppose by way of contradiction, there exists a first positive time $T_\e \in (0,\infty)$ such that \eqref{eq:close to where we started from} fails. 
In other words, there exists a point $x_1^\e \in \bD,$ a critical point $\mathbf{a} = (a_1,\cdots, a_n) \in \calk_{\ma},$ and points $(x_j^\e)_{j=2}^n \in \bD$ and  such that, without loss of generality,
\begin{align} \label{eq:contradiction assumption}
|x_1^\e - a_1| = \alpha,& \hspace{1cm} |u_\e(x_1^\e,T_\e)| \leq \frac{1}{2},& \\ \notag
|x_j^\e - a_j| \leq \alpha,& \hspace{1cm} |u_\e(x_j^\e,T_\e)| \leq \frac{1}{2}, \hspace{.3cm} &j=2,3,\cdots,n.
\end{align}
Indeed, the point $x_1^\e$ and $a_1$ such that $\ma = (a_1,\cdots, a_n) \in \calk_\ma$ are obtained from the contradiction hypothesis, and subsequently, we choose the points $(x_j^\e)_{j=2}^n$ to satisfy the above conditions. 
Note that by virtue of the symmetry Claim \ref{lem:symmetry lemma}, we may assume that the points $(x_j^\e)$ satisfy the $k-$fold symmetry property. \par  
Set $\mu := \frac{1}{16n + 1},$ and $V_\e(x) := u_\e(x,T_\e).$ We will also find it convenient to use the notation 
\begin{align*}
\cale^{\omega_0}(w,A) := \int_A \frac{1}{2} |\nabla w|^2 + \frac{(1-|w|^2)^2}{4\e^2} \,dy + \frac{\omega_0}{2} \int_A k |w|^2 + \frac{1}{m} w \cdot (y^\perp \cdot \nabla )w^\perp \,dy,
\end{align*}
for all $w \in H^1_{g_n}(\bD)$ having the $k-$fold symmetry property, where $A \subset \bD$ is any Lebesgue measurable set. We will also use similar notation for the functionals $E_\e$ and $\scrj.$ 
\par 
By Fubini's theorem, the energy decreasing property of the gradient flow, and the $\log \frac{1}{\e}$ upper bound on the initial data, it can be easily seen that, for each $j = 1,\cdots, n,$ there exists $\mu_j \in [\mu, 2\mu]$ and a positive number $C(\mu)$ satisfying the estimate 
\begin{align}
\label{eq:Struwe estimate}
\e^{\mu_j} \cale^{\omega_0}\left(V_\e,\partial B(x_j^\e, \e^{\mu_j})\right) \leq C(\mu) 
\end{align}
for all $\e >0$ sufficiently small. We remark that since $V_\e$ is actually smooth, its restriction to the one dimensional set as in \eqref{eq:Struwe estimate} is actually meaningful. Since \eqref{eq:Struwe estimate} holds for all $\e >0$ small enough, it follows that $d_j := \deg(V_\e, \partial B(x_j^\e, \e^{\mu_j}),0)$ is well defined. By the Structure Theorem (cf. \cite{LinCPAM}, Theorem 2.4), we may assume that the points $(x_j^\e)_{j=1}^n$ satisfying \eqref{eq:contradiction assumption}, additionally satisfy 
\begin{align} \label{eq:bigger than half}
\left|V_\e|_{\partial B(x_j^\e,\e^{\mu_j})}\right| &\geq \frac{1}{2}, \\ \label{eq:degree condition}
d_j &= +1 \hspace{1cm} \mbox{ for all } \e >0 \mbox{ sufficiently small. }
\end{align}  
for all $j = 1,\cdots, n$. Set 
\begin{align*}
\bD_\e := \bD \backslash \bigcup_{j=1}^n B(x_j^\e,\e^{\mu_j}). 
\end{align*}
Next set $U_\e$ to be the solution to the variational problem 
\begin{align} \label{eq:variational problem punctured domain}
\inf \cale^{\omega_0}(w, \bD_\e)
\end{align}
among functions $w \in H^1(\bD_\e)$ with $w|_{\partial \mathbb{D}_\e} = V_\e$ and $w$ having the $k-$fold symmetry property. Existence of minimizers $U_\e$ in this admissible set follows by an easy application of the direct method. Since the Brouwer degree of $V_\e|_{\partial \bD_\e}$ is zero, the estimate \eqref{eq:bigger than half} along with arguments as in \cite{LinAIHP} Pg. 615, yield that  
\begin{align*}
|U_\e| \geq \frac{1}{2} \mbox{ on } \bD_\e.
\end{align*}
Finally, by compactness we may assume (upon perhaps passing to a subsequence) that $x_j^\e \to \overline{a}_j$ as $\e \to 0$ for each $j = 1,\cdots n,$ with 
\begin{align} \label{eq:contradiction assumption limit}
|\overline{a}_1 - a_1 | &= \alpha, \\
|\overline{a}_j - a_j| &\leq \alpha, \hspace{1cm} \mbox{ for } j = 2,3, \cdots, n. 
\end{align}
Define the function
\begin{align*}
\overline{V}_\e (y) := \left\{ 
\begin{array}{cc}
V_\e(y) \hspace{1cm} & y \in \bigcup_{j=1}^n B(x_j^\e,\e^{\mu_j}), \\
U_\e(y) \hspace{1cm} & y \in \bD_\e.
\end{array}
\right.
\end{align*}
We will use $\overline{V}_\e|_{\bD_\e}$ as a competitor to the variational problem \eqref{eq:variational problem punctured domain}.
Then, on the one hand by the energy decreasing property of the gradient flow \eqref{eq:heat eq}-\eqref{eq:heat bc} , 
\begin{align} \notag
\cale^{\omega_0} (\overline{V}_\e, \bD) \leq \cale^{\omega_0}(V_\e,\bD) &\leq \cale^{\omega_0}(u_\e(\cdot,0))\\ \notag
&= \cale^{\omega_0}(w_\e) \\ \label{eq:upperboundstep5 claim}
&= n\left( \pi \log \frac{1}{\e} + \gamma \right) + \pi \calh^{n,\omega_0}(\mathbf{a}) + o_\e(1).
\end{align}
On the other hand, from \eqref{eq:degree condition} we are led to conclude following \cite{LinAIHP} and the continuity under weak topology in $H^1$ of the momentum functional $\scrj$ that on compact subsets of $\bD_\e,$ as $\e \to 0$ 
\begin{align} \label{eq:CHM convergence}
U_\e \to U^*_{\overline{\ma}}
\end{align}
where $U^*_{\overline{\ma}}$ is the canonical harmonic map associated to the points $\overline{\ma}.$ In fact we will sketch a different proof of this during the course of the proof of Theorem \ref{thm:asymptotics}. Consequently, by \cite{BBH} Lemma VIII.2, for any sufficiently small \textit{fixed} $\rho \ll \alpha,$ using the notation $\bD_\rho(\overline{\mathbf{a}}) := \bD \backslash \bigcup_{j=1}^n B(\overline{a_j},\rho),$ we find
\begin{align*}
\cale^{\omega_0}\left(U_\e,\bD_\rho(\overline{\mathbf{a}})\right) = n \pi \log \frac{1}{\rho} + \pi\calh^{n,\omega_0} (\overline{\ma}) + o_\e(1) + O(\rho). 
\end{align*}
We will make our choice of $\rho$ (\textit{independently} of $\e$ ) at the end of the proof. We can hence conclude that 
\begin{align} \notag
\cale^{\omega_0}(\overline{V}_\e, \bD) &= E_\e\left(\overline{V}_\e, \bD_\rho(\overline{\ma})\right) + E_\e\left(\overline{V}_\e,\bigcup_{j=1}^n B(\overline{a}_j,\rho)\right) - \omega_0 \scrj(\overline{V}_\e, \bD)\\  \notag
& = E_\e\left(U_\e,\bD_\rho(\overline{\ma}) \right) + E_\e\left(\overline{V}_\e, \bigcup_{j=1}^n B(\overline{a}_j,\rho) \right) - \omega_0\scrj\left(\overline{V}_\e, \bD \right)\\ \notag
& \geq n \pi \log \frac{1}{\rho} + \pi\calh^{n,\omega_0} (\overline{\ma}) + O(\rho) + o_\e(1) + n I(\e, \rho), \\ \label{eq:lowerboundforstep 5 claim}
&\geq n \left( \pi \log \frac{1}{\e} + \gamma\right) + \pi\calh^{n,\omega_0} (\overline{\ma}) + O(\rho) + o_\e(1)
\end{align}
where, we have used \eqref{eq:CHM convergence} and the remarks at the start of the section on the quantity $I(\e,\rho).$ Finally, we can put together the upper and lower bounds from \eqref{eq:upperboundstep5 claim}-\eqref{eq:lowerboundforstep 5 claim} to arrive at
\begin{align*}
\pi \calh^{n,\omega_0}(\overline{\ma}) + O(\rho) - \pi \calh^{n,\omega_0}(\ma) \leq o_\e (1),
\end{align*}
a contradiction to \eqref{eq:hessian}, by choosing $\rho$ sufficiently small that $O(\rho)$ terms are controlled by $\min(\lambda_2,\cdots, \lambda_n)\times \frac{\delta_*^2}{50 n} > 0$ since $\overline{\ma} \not\in \calk_\ma$ by virtue of \eqref{eq:contradiction assumption limit}.  
\end{proof}
Having proven Claim \ref{claim:close}, we can now conclude that for each time $m = 1,2,\cdots,$ the map $u_\e(\cdot,m)$ has essential zeroes $b^{(m)}_1,\cdots, b^{(m)}_n \in \bD$ that are well-defined up to errors that are at most $4{\e}^{\alpha_0}$ (\cite{LinMinMax}, Theorem 1.1 ). By virtue of the symmetry Claim \ref{lem:symmetry lemma}, we may take $b^{(m)}_1,\cdots, b^{(m)}_n$  having the same symmetry property as $\ma.$ Finally using Claim \ref{claim:close} we conclude that 
\begin{align*}
\mathbf{b}^{(m)} := (b^{(m)}_1,\cdots, b^{(m)}_n) \in \calk_{\ma}^\alpha \subsetneq \calk_{\ma}^\delta.
\end{align*}
Using arguments from the proof of the Claim \ref{claim:close}, we can now prove that 
\begin{align} \notag
\cale^{\omega_0}(u_\e(\cdot,m)) & \geq n \left( \pi \log \frac{1}{\e} + \gamma \right) + \pi\calh^{n,\omega_0} (\mathbf{b}^{(m)}) + o_\e(1)  \\ \label{eq:lowerboundmincase}
& \geq n\left( \pi \log \frac{1}{\e} + \gamma\right) + \pi\calh^{n,\omega_0} (\mathbf{a}) + o_\e(1).
\end{align}
By compactness we may assume that as $m \to \infty,$ we have $\mathbf{b}^{(m)} \to \mathbf{p}_*$ with the $k-$fold symmetry property. Moreover by Claim \ref{claim:close} above, $\mathbf{p}_* \in \calk_{\mathbf{a}}^\alpha \subsetneq \calk_{\mathbf{a}}^{\delta_*}. $ 
\par 
Now, on the one hand, by the energy decreasing property of gradient flow, we find for each $t > 0,$
\begin{align} \label{eq:upperboundmincase}
\cale^{\omega_0} (u_\e(\cdot,t)) \leq \cale^{\omega_0} (u_\e(x,0)) \leq n\left(\pi \log \frac{1}{\e} +  \gamma \right) + \pi\calh^{n,\omega_0}(\ma) + o_\e(1).
\end{align}
Using the gradient flow property and Theorem 2 of \cite{LeonSimon} by L. Simon, we conclude that $v_\e(x) := \lim_{t \to \infty} u_\e(x,t)$ exists for each fixed $\e > 0$ in $H^1,$ and is a critical point of $\cale^{\omega_0}.$ Furthermore, $v_\e$ is a solution to \eqref{ansatzpde}-\eqref{ansatzbc} with $\omega = \omega_0.$    
On the other hand though, for each fixed $t,$ using the energy decreasing property and using \eqref{eq:lowerboundmincase}, 
\begin{align*}
\cale^{\omega_0} (u_\e(\cdot,t)) \geq \limsup_{m \to \infty} \cale^{\omega_0} (u_\e(\cdot,m)) \geq  n \left( \pi \log \frac{1}{\e} + \gamma \right) + \pi\calh^{n,\omega_0} (\mathbf{a}) + o_\e(1). 
\end{align*}
Combining the last two estimates we find that  
\begin{align*}
\left| \cale^{\omega_0} (u_\e(\cdot,t)) - n \left( \pi \log \frac{1}{\e} + \gamma \right) - \pi\calh^{n,\omega_0} (\ma) \right| = o_\e(1),
\end{align*}
\textit{uniformly} in $t$ as $\e \to 0^+.$ 
Passing to the limit $t \to \infty,$ we obtain 
\begin{align}
\left| \cale^{\omega_0} (v_\e) - n \left( \pi \log \frac{1}{\e} + \gamma \right) - \pi\calh^{n,\omega_0} (\ma) \right| = o_\e(1). 
\end{align}
Plugging this back into the ansatz \eqref{ansatz} completes the proof of the theorem in the case $S = 1,$ i.e. the local minimizer case. 
\par
In Steps 6 through 8 we will complete the proof in the case $2 \leq S \leq 2N_R-1,$ i.e. the saddle point case. At that point, with our experience from the local minimizer and saddle point cases, the local maximizer case $S = 2N_R$ will be an easy modification of these earlier cases, and we will briefly pursue it at the end of Step 8.  
\par 
\textbf{Step 5:} We next use the gradient flow defined in equations \eqref{eq:heat eq}-\eqref{eq:heat bc} to deform the set $D$ defined at the end of Step 2 in such a way that the deformed one parameter family of sets stays linked with $D^\perp.$ Let $\wmp \in D$ be arbitrary. Note that for each $t \geq 0, u_\e(\cdot,t; \wmp) \in H^1_{g_n}(\bD, \bC),$ and by the energy decreasing property of the gradient flow \eqref{eq:heat eq}, 
\begin{align} \notag
\cale^{\omega_0}(u_\e(\cdot,t;\wmp)) &\leq \cale^{\omega_0}(u_\e(\cdot,0;\wmp)) \\ \notag
& = \cale^{\omega_0} (w^\e_{\wmp})\\ \notag
&= E_\e(w^\e_{\wmp}) + \omega_0 \scrj(w^\e_{\wmp}) \\ \notag
&= N\left( \pi \log \frac{1}{\e} + \gamma \right) + \pi \calh^{n,\omega_0}(\wmp,d) + o_\e(1) \\ \label{eq:init data upper bound}
&\leq N\left( \pi \log \frac{1}{\e} + \gamma \right) + \pi \calh^{n,\omega_0}(\wma,d) + o_\e(1) .
\end{align}
where in the last line we have used the fact that $\wmp \in D.$ 
\par We will use this observation along with Proposition \ref{lem: proj lemma} to construct a one parameter family of deformations $h_t(\wmp) = h(t,\wmp)$ of $\overline{D}.$ 
Since $\overline{\call_{\wma}^\delta}$ is a compact subset of $(\bD^*)^{N_R},$ our choice of initial data (cf. Equations \eqref{eq:log bound init data} and \eqref{eq:momentum const}) and estimate \eqref{eq:init data upper bound} imply that $\{ u_\e(\cdot, t;\wmp): \wmp \in \overline{D}\}$ satisfy the hypothesis of Proposition \ref{lem: proj lemma} for each point $\wmp \in D,$ for at least small $t > 0$. Let $\mathcal{P}$ denote the projection map that results from Proposition \ref{lem: proj lemma}. For any $\wmp \in \overline{D},$ let $\tau_{\wmp} \geq 0$ to be the first time such that 
\begin{align}
\label{eq:time of exit}
\mathcal{P}\left(u_\e(\cdot,\tau_{\wmp};\wmp)\right) \in \partial \call_{\wma}^\delta. 
\end{align}
If $\mathcal{P}(u_\e(\cdot,t,\wmp)) \not\in  \partial \call_{\wma}^\delta$ for \textit{any} $t > 0,$ then set $\tau_{\wmp} = +\infty.$ Given this convention, define
\begin{align}
h(t,\wmp) := \left\{
\begin{array}{cc}
\wmp \hspace{1cm}, & t = 0, \\
\mathcal{P} \left(u_\e(\cdot,t;\wmp)\right), \hspace{1cm} & 0 \leq t \leq \tau_{\wmp}, \\ 
\mathcal{P} \left(u_\e(\cdot,\tau_{\wmp};\wmp)\right) \hspace{1cm} & t \geq \tau_{\wmp}.
\end{array}
\right.
\end{align}
Then it is clear that $h(t,\wmp) : \bR_+ \times D \to \overline{\call_{\wma}^\delta}$ is continuous. Furthermore, if $h(t,\wmp)$ lies in the interior of the set $\call_{\wma}^\delta$ then by Prop. \ref{lem: proj lemma}, $h(t,\wmp)$ is at most $4\e^{\alpha_0}$ away from the essential zeroes of $u_\e(\cdot,t;\wmp).$ 
\par 
\textbf{Step 6:} We claim that the sets $\{h(t,\wmq): \wmq \in \partial D\}$ and $D^\perp$ are homotopically linked for all $t \geq 0.$ In other words, we must show that for all time $t \geq 0,$ 
\begin{align*}
\{h(t,\wmq): \wmq \in \partial D \} \cap D^\perp = \emptyset
\end{align*}
implies that 
\begin{align*}
\{h(t,\wmq): \wmq \in D \} \cap D^\perp \neq \emptyset. 
\end{align*}
To prove this, first observe that $h(0,\wmq) = \wmq,$ and Claim \ref{lem:fdlinking} proves the claim for $t = 0.$ To prove the claim for $t > 0,$ let $\sigma > 0$ be small, such that 
\begin{align}
\mbox{ If } \hspace{.3cm} \wmy \in \call_{\wma}^\delta, \dist(\wmy,\partial D^\perp) < 2\sigma, \hspace{.6cm} \mbox{ then } \hspace{.3cm} \pi\calh^{n,\omega_0}(\wmy) \geq \pi\calh^{n,\omega_0}(\wma) + \frac{2\delta_1}{3}.
\end{align}
\textit{Sub-claim $\mathfrak{C}_1$:} For $t >0, $ if $\wmq \in \partial D, $ then $\dist(h(t,\wmq),\call_{\wma}) > \delta - \frac{\sigma}{4}.$\\
\textit{Sub-claim $\mathfrak{C}_2$:} For $t > 0,$ if $\wmq \in \partial D,$ then $\dist(h(t,\wmq),\partial D^\perp) \geq 2\sigma > 0. $ \\
Granting these two sub-claims for now, they would imply that $\{h(t,\wmq):\wmq \in \partial D\} \cap D^\perp = \emptyset$ for all $t \geq 0.$ Furthermore, $h(0,\wmp) = \wmp$ for all $\wmp \in \overline{D}$ by definition; letting $\Pi:\overline{\call_{\wma}^\delta} \to D$ denote the projection map, we conclude
\begin{align*}
\deg(\Pi \circ h(t,\partial D),\partial D, 0) \mbox{ is well-defined }.
\end{align*}
Consequently, by the homotopy invariance of Brouwer degree,
\begin{align*}
\deg( \Pi \circ h(0,\partial D),\partial D,0) = \deg(\Pi \circ id|_{\partial D},\partial D,0) = \deg(id,\partial D,0)= 1.
\end{align*}
This proves that $\{h(t,\wmq): \wmq \in \partial D\}$ and $D^\perp$ are homotopically linked.\par 
It remains to prove Sub-claims $\mathfrak{C}_1$ and $\mathfrak{C}_2$. We will prove Sub-claim $\mathfrak{C}_1$; the proof of Sub-claim $\mathfrak{C}_2$ is very similar. \\
\textit{Proof of Sub-claim $\mathfrak{C}_1$:} Suppose to the contrary that there exists $\wmp \in \partial D$ such that $\dist(h(t,\wmp),\call_{\wma}) \leq \delta - \frac{\sigma}{4}$ for some $t > 0.$  Then up to errors that are at most $4\e^{\alpha_0},$ we have $h(t,\wmp)$ are the essential zeroes of $u_\e(\cdot,t;\wmp).$ Then following arguments similar to those in the proof of Claim \ref{claim:close} there exists $\rho_1$ independent of $\e,$ such that for $0 < \rho < \rho_1,$ one has 
\begin{align*}
\cale^{\omega_0} \left(u_\e(\cdot,t;\wmp)\right) &\geq N \pi \log \frac{1}{\rho} + N \pi I(\e, \rho) + \pi\calh^{n,\omega_0}(h(t,\wmp)) + O(\rho) + o_\e(1)\\
&\geq N \pi \log \frac{1}{\e} + N \gamma + \pi\calh^{n,\omega_0}(h(t,\wmp)) + O(\rho) + o_\e(1). 
\end{align*}
In the second inequality we have used the definition and properties of the constant $\gamma$ from \eqref{eq:gamma}, Pg.24. By comparing this estimate to \eqref{eq:init data upper bound}, we get a contradiction to \eqref{eq:hessian} upon choosing $\rho \ll \delta - \frac{\sigma}{4}.$ This completes the proof of Sub-claim $\mathfrak{C}_1.$ The proof of Sub-claim $\mathfrak{C}_2$ is identical, and this completes Step 6. 
\par 
\textbf{Step 7:} In this step we prove an $\inf-\sup$ characterization of the critical point. Define now 
\begin{align}
\label{eq:crit value}
c_\e := \inf_{t > 0 } \sup_{\wmp \in \overline{D}} \cale^{\omega_0} \big(u_\e(\cdot,t;\wmp)\big)
\end{align}
\textbf{Claim:}  $c_\e$ is a critical value of $\cale^{\omega_0}.$ \\ 
\textit{Proof of claim:} Suppose to the contrary that $c_\e$ is a regular value of $\cale^{\omega_0}.$ Then by Lemma \ref{lem:Palais-Smale}, there exist positive numbers $\eta_1, \eta_2 > 0,$ such that if
\begin{align}
|\cale^{\omega_0} (u) - c_\e| < \eta_1
\end{align}
then 
\begin{align}
|\nabla \cale^{\omega_0}(u)| > \eta_2. 
\end{align}
By definition of infimum, there exists $t_0 > 0,$ such that 
\begin{align} \label{eq:PS proof}
c_\e \leq \sup_{\wmp \in \overline{D}} \cale^{\omega_0}\big(u_\e(\cdot,t_0;\wmp)\big) \leq c_\e + \eta_1.
\end{align}
But by the energy decreasing property of the gradient flow \eqref{eq:heat eq} and the definition \eqref{eq:crit value}, it follows that \eqref{eq:PS proof} holds for all $t \geq t_0.$ But this means that 
\begin{align*}
\frac{\,d}{\,dt} \sup_{\wmp \in \overline{D}} \cale^{\omega_0} (u_\e(\cdot,t;\wmp)) = - |\nabla \cale^{\omega_0}(u)|^2 \leq - \eta_2^2, \hspace{1cm} \mbox{ for all } t \geq t_0.
\end{align*}
This immediately yields a contradiction to the definition of $c_\e$, cf. \eqref{eq:crit value}, for $t \geq t_0 + \frac{1}{\eta_2^2}.$ This completes the proof of the Claim. 
\par
\textbf{Step 8:} Finally, we conclude the proof of our existence Theorem \ref{thm:existence} following \cite{LinMinMax}. From Step 6 it follows that for each time $m = 1,2, \cdots, $ there exists $\wmp_m \in D$ such that $h(m,\wmp_m) \in D^\perp.$ By arguments similar to those in the proof of Claim \ref{claim:close} we also conclude 
\begin{align*}
\cale^{\omega_0} \left(u_\e(\cdot,m; \wmp_m)\right) &\geq N \left( \pi \log \frac{1}{\e} + \gamma \right) + \pi\calh^{n,\omega_0}(h(m,\wmp_m)) + o_\e(1), \\
&\geq N \left( \pi \log \frac{1}{\e} + \gamma \right) + \pi\calh^{n,\omega_0}(\wma) + o_\e(1)
\end{align*}
where in the second inequality we have used the fact that $h(m,\wmp_m) \in D^\perp.$ Without loss of generality, we may assume that (possibly upon passing to a subsequence) $\wmp_m \to \wmp_*$. Using the energy decreasing nature of \eqref{eq:heat eq} it follows that for any fixed $t >0,$
\begin{align*}
\cale^{\omega_0} (u_\e(\cdot,t;\wmp_*)) = \lim_{m \to \infty} \cale^{\omega_0}(u_\e(\cdot,t;\wmp_m)) \geq \lim_{m \to \infty} \cale^{\omega_0}(u_\e(\cdot,m;\wmp_m)).
\end{align*}
The last two estimates imply that 
\begin{align*}
\left|\cale^{\omega_0} (u_\e(\cdot,t;\wmp_*)) - N\left( \pi \log \frac{1}{\e} + \gamma \right) - \pi\calh^{n, \omega_0}(\wma)\right| = o_\e(1)
\end{align*}
as $\e \to 0$ uniformly in $t.$ Since the dependence of $\cale^{\omega_0}$ on $u, \nabla u$ is analytic, we may use the gradient flow structure of \eqref{eq:heat eq} and \cite{LeonSimon}, Theorem 2 to conclude that the limit 
\begin{align}
v_\e(\cdot) := \lim_{t \to \infty} u_\e(\cdot,t;\wmp_*)
\end{align}
exists in $H^1,$ and is a critical point of $\cale^{\omega_0},$ solving the PDE \eqref{ansatzpde}-\eqref{ansatzbc}. This completes the proof of the fact that there exists a critical point $v_\e$ of $\cale^{\omega_0}$ having the $k-$ fold symmetry property, as in \eqref{eq:init data symm}. Furthermore $v_\e$ satisfies 
\begin{align}
\label{eq:crit point estimate}
\left| \cale^{\omega_0}(v_\e) - N \left( \pi \log \frac{1}{\e} + \gamma\right) - \pi\calh^{n,\omega_0}(\wma)\right| = o_\e(1)
\end{align}
as $\e \to 0.$
\par 
The case $S = 2N_R$ remains, and is now fairly easy. Define the one parameter family of deformations $\{h(t,\wmp): \wmp \in D\}$ as in the saddle case. Then similar to the saddle case, for each $m = 1,2,\cdots,$ it can be shown by easy degree theory arguments as before that there exist $\wmp_m \in D$ such that $h(m, \wmp_m) = \wma.$ This sequence of points gives us the necessary estimates and we reach similar conclusions as in the local minimizer and the saddle cases. 
\par 
In summary, for critical points $\ma$ of the functional $\calh^{n,\omega_0}$ as are under consideration, for $\e $ sufficiently small we have obtained critical points $v_\e$ of $\cale^{\omega_0}$ with the same symmetry property as $\ma$ satisfying the estimate 
\begin{align}
\label{eq:grandestimate}
\left| \cale^{\omega_0} (v_\e) - N\left( \pi \log \frac{1}{\e} + \gamma \right) - \pi\calh^{n,\omega_0}(\ma, d)\right| = o_\e(1)
\end{align}
as $\e \to 0.$ 
\par 
We are (finally!) ready to plug back the ansatz \eqref{ansatz} to obtain a time periodic solution of the Gross-Pitaevskii equation \eqref{eq:GP} whose period is $T = \frac{2\pi m}{\omega_0}.$ This completes the proof of the  theorem concerning existence of a time periodic solution to \eqref{eq:GP} also possessing the symmetry property \eqref{eq:symmetry prop of solution}. 
\end{proof}

Our second main theorem of this section concerns the $\e \to 0$ asymptotics of the periodic solutions constructed in Theorem \ref{thm:existence}.
\begin{theorem} \label{thm:asymptotics} Let $(\ma(t),d)$ be as in the previous theorem, and let $u_\e(x,t)$ be the corresponding time periodic solution constructed there. Then there exists $\theta_* \in [0,2\pi)$ such that 
\begin{align} \label{eq:periodic sols follow ODE}
Ju_\e(\cdot , t) \rightharpoonup \pi \sum_{i=1}^N d_i \delta_{e^{i\theta_*}a_i(t)}, 
\end{align}
for each fixed $t \in \bR,$ where the convergence is in the weak-$*$ topology of $(C^{0,\alpha}_c(\bD))^*$ for every $\alpha \in (0,1).$
\end{theorem}
In fact, our proof will show something in addition to \eqref{eq:periodic sols follow ODE}: let $v_\e$ be related to $u_\e$ as in the statement of the previous theorem. Our proof will show that there exists a finite set $F \subset \bD$ with exactly $N$ points such that as $\e \to 0^+,$ we have, for each $\kappa \in \bN$ and each compact set $K \subset \bD \backslash F$  the convergence $v_\e \to v_*$ holds in $C^\kappa(K).$ Moreover, $|v_*| = 1$ on $\bD \backslash F$ and in fact, $v_*$ is a smooth harmonic map on $\bD \backslash F$ in the sense that 
\begin{align*}
\Delta v_* + |Dv_*|^2 v_* &= 0, \hspace{1cm} \bD \backslash F,\\
v_*|_{\partial \bD} &= e^{in\theta}.
\end{align*}
It is natural to wonder why the angular speed $\omega_0$ which appeared on the RHS of the PDE's for $v_\e$ seemingly disappears in the limiting PDE for $v_*.$ We show an analogue of the so-called \textit{vanishing gradient property} from \cite{BBH} in which we recover the angular speed $\omega_0.$ Indeed, it appears as a sort of Lagrange multiplier for the renormalized energy, thereby deciding vortex location.  
\par The proof of Theorem \ref{thm:asymptotics} requires the following Pohazaev identity.  
\begin{lemma} \label{lem:Pohazaev}
Let $v_\e$ be as in Theorem \ref{thm:existence}. Then 
\begin{align}
\label{eq:Pohazaev}
\frac{1}{2}\int_{\partial \bD} \left| \frac{\partial v_\e}{\partial \nu}\right|^2 - \pi n^2 + \int_{\bD} \frac{(1-|v_\e|^2)^2}{2\e^2} = \omega_0 k \left( \pi - \int_{\bD} |v_\e|^2 \,dy\right) - \frac{\omega_0}{m} \int_{\bD} |y|^2 (Jv_\e) \,dy. 
\end{align}
\end{lemma}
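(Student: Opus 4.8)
The plan is to obtain \eqref{eq:Pohazaev} as a Pohozaev (Rellich) identity, by testing the Euler--Lagrange equation \eqref{ansatzpde} in the real inner product against the dilation generator $(y\cdot\nabla)v_\e$ and integrating over $\bD$. Since $v_\e$ is a critical point of $\cale^{\omega_0}$ it is a classical solution of \eqref{ansatzpde}--\eqref{ansatzbc}, smooth up to $\bdry$ by elliptic regularity (the datum $g_n$ being smooth), so all the integrations by parts below are legitimate. Pairing \eqref{ansatzpde} with $(y\cdot\nabla)v_\e$ yields four terms, and I would match each with one of the four terms of \eqref{eq:Pohazaev}.

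For the Laplacian term I would use the standard Rellich identity $\Delta v_\e\cdot(y\cdot\nabla v_\e)=\diver\big(\nabla v_\e\,(y\cdot\nabla v_\e)\big)-\tfrac12\diver\big(|\nabla v_\e|^2y\big)$, valid in $\bR^2$ where $\diver y=2$; integrating and using $\nu=y$ on $\bdry$ gives $\int_{\bdry}\big(|\partial_\nu v_\e|^2-\tfrac12|\nabla v_\e|^2\big)$. Splitting $|\nabla v_\e|^2=|\partial_\nu v_\e|^2+|\partial_\tau v_\e|^2$ on $\bdry$ and using the boundary condition $v_\e=e^{in\theta}$, hence $|\partial_\tau v_\e|^2=n^2$, produces $\int_\bD \Delta v_\e\cdot(y\cdot\nabla v_\e)=\tfrac12\int_{\bdry}|\partial_\nu v_\e|^2-\pi n^2$. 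For the potential term I would use $v_\e\cdot(y\cdot\nabla v_\e)=\tfrac12(y\cdot\nabla)|v_\e|^2$; writing $W:=1-|v_\e|^2$ and integrating by parts, $\int_\bD\tfrac{1}{\e^2}v_\e\cdot(y\cdot\nabla v_\e)(1-|v_\e|^2)=-\tfrac{1}{4\e^2}\int_\bD(y\cdot\nabla)W^2=\tfrac{1}{2\e^2}\int_\bD W^2$, the boundary contribution vanishing since $|v_\e|\equiv1$ on $\bdry$. These two pieces assemble exactly into the left side of \eqref{eq:Pohazaev}.

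On the right side, the term $\omega_0 k\int_\bD v_\e\cdot(y\cdot\nabla v_\e)$ is treated by the same identity $v_\e\cdot(y\cdot\nabla v_\e)=\tfrac12(y\cdot\nabla)|v_\e|^2$ followed by one integration by parts, now retaining the boundary term (which equals $\pi$ since $|v_\e|=1$ there), giving $\omega_0 k\big(\pi-\int_\bD|v_\e|^2\big)$. The rotational term $\tfrac{\omega_0}{m}\int_\bD\big((y^\perp\cdot\nabla)v_\e^\perp\big)\cdot\big((y\cdot\nabla)v_\e\big)$ needs no integration by parts: since $(y^\perp\cdot\nabla)v_\e^\perp=\big((y^\perp\cdot\nabla)v_\e\big)^\perp$ and $a^\perp\cdot b=a\times b$ for $a,b\in\bR^2$, the integrand equals $\big((y^\perp\cdot\nabla)v_\e\big)\times\big((y\cdot\nabla)v_\e\big)$; expanding $(-y_2\partial_1v_\e+y_1\partial_2v_\e)\times(y_1\partial_1v_\e+y_2\partial_2v_\e)$ and using bilinearity and antisymmetry of $\times$ (so $\partial_iv_\e\times\partial_iv_\e=0$ and $\partial_2v_\e\times\partial_1v_\e=-Jv_\e$) collapses it pointwise to $-(y_1^2+y_2^2)Jv_\e=-|y|^2Jv_\e$. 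Integrating gives $-\tfrac{\omega_0}{m}\int_\bD|y|^2Jv_\e$, and collecting all four contributions yields \eqref{eq:Pohazaev}.

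The computation is essentially routine; the only places that demand care are the bookkeeping of the $\perp$- and $\times$-sign conventions in the rotational term (the one genuinely new ingredient relative to the classical Ginzburg--Landau Pohozaev identity), and checking that no boundary terms are dropped in the potential and $\omega_0k$ integrations by parts, which is exactly where the Dirichlet condition $|v_\e|=1$ on $\bdry$ enters.
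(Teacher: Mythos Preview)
Your proof is correct and follows exactly the same strategy as the paper: test \eqref{ansatzpde} against the radial vector field $(y\cdot\nabla)v_\e$, integrate over $\bD$, and compute each of the four resulting terms. The paper only spells out the pointwise identity $(y\cdot\nabla v)\cdot(y^\perp\cdot\nabla v^\perp)=-|y|^2 Jv$ (declaring the remaining three terms standard), and your argument reproduces precisely this computation together with the routine Rellich and divergence manipulations for the other terms.
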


\begin{proof}[Proof of Lemma \ref{lem:Pohazaev}]
Recall that the function $v_\e$ satisfies the Elliptic PDE \eqref{ansatzpde}-\eqref{ansatzbc}. Then, the proof of this Pohazaev identity proceeds as usual by taking the dot product on $(y \cdot \nabla v_\e)$ with each side of the PDE \eqref{ansatzpde}. We suppress the dependence of $v$ on $\e$ for the sake of clarity. We obtain,
\begin{align*}
(y \cdot \nabla v) \cdot \left(\Delta v + \frac{v(1-|v|^2)}{\e^2}\right) = \omega_0 k (y \cdot \nabla v) \cdot v + \frac{\omega_0}{m} (y \cdot \nabla v)\cdot (y^\perp \cdot \nabla v^\perp).
\end{align*}
We integrate on $\bD$ and proceed as usual. Since the calculations for the terms on the left hand side, and the first term on the right hand side are fairly standard, we only compute that 
\begin{align*}
(y \cdot \nabla v)\cdot (y^\perp \cdot \nabla v^\perp) &= -|y|^2 \left( \partial_1v^1 \partial_2v^2 - \partial_1 v^2 \partial_2 v^1\right)\\
&= - |y|^2 (Jv).
\end{align*}
where $\partial_i = \frac{\partial}{\partial y_i}.$ This completes the proof of Pohazaev's identity. 
\end{proof}

\begin{proof}[Proof of Theorem \ref{thm:asymptotics}]
Thanks to the energy estimate \eqref{eq:energy close}, we can apply the Compactness Theorem, cf. \cite[Theorem 3.1]{JerrardSoner}. This says that $Jv_\e \rightharpoonup J$ where $J = \pi \sum_{i=1}^\mathcal{N} k_i \delta_{\alpha_i},$ in the dual Sobolev space $W^{-1,1}(\bD)$. Here $\alpha_i \in \bD, k_i \in \bZ$ and $\sum_{i=1}^\mathcal{N}|k_i|$ is bounded independently of $\e.$ The exact details of $\alpha_i, k_i, \mathcal{N}$ do not matter for the time being since we are only interested in obtaining a uniform estimate on the potential term. Then writing 
\begin{align*}
\int_\bD |y|^2 (Jv_\e) \,dy = \int_\bD (|y|^2 - 1) (Jv_\e)\,dy + \int_\bD Jv_\e \,dy, 
\end{align*}
since $(|y|^2 -1 )|_{\partial \bD} = 0,$ we find $\int_{\bD} (|y|^2 - 1)(Jv_\e)\,dy \to \pi\sum_{i=1}^\mathcal{N} k_i (|\alpha_i|^2 - 1).$ Finally, by Stokes' theorem, 
\begin{align*}
\int_{\bD} Jv_\e\,dy &= \frac{1}{2}\int_{\bD} \nabla \times j(v_\e) 
= \frac{1}{2} \int_{\partial \bD} j(v_\e) \cdot \,d\theta
= \frac{1}{2} \int_{\partial \bD} (i v_\e, (v_\e)_\theta) 
= \frac{1}{2} \int_{\partial \bD} n \,d \theta = n \pi. 
\end{align*}
Putting these together, we obtain that 
\begin{align}
\label{eq:potential estimate}
\int_{\bD} \frac{(1-|v_\e|^2)^2}{2\e^2} + \frac{1}{2}\int_{\partial \bD} \left| \frac{\partial v_\e}{\partial \nu}\right|^2 \leq C(N,n,k,m,\omega_0). 
\end{align}
Consequently, following the analysis of Chapter X, \cite{BBH} using the estimate \eqref{eq:energy close}, Lemma \ref{lem:maximum princple}, and \eqref{eq:potential estimate} we conclude that upon possibly passing to a sub-sequence $\e_n \to 0,$ there exist distinct limiting vortex locations $(b_1,d_1), \cdots, (b_N,d_N)$ with $b_i \in \bD,$ and there exists $v_* \in C^\infty (\bD \backslash \{b_1, \cdots, b_N\}; \mathbb{S}^1)$ satisfying the following:
\begin{itemize}
\item For every $1 \leq p < 2,$ one has $\|v_{\e_n}\|_{W^{1,p}(\bD)} \leq C_p.$ 
\item For any compact set $K \subset \bD \backslash \{b_1, \cdots, b_N\}$ and for any natural number $\kappa \in \bN,$
\begin{align} \label{eq:ckcvgs}
\|v_{\e_n} - v_*\|_{C^\kappa(K)} \leq C(\kappa,K) \e_n^2,
\end{align}
for $\e_n$ sufficiently small.
\item $|v_*| = 1.$ This comes from \eqref{eq:potential estimate}. 
\item With $K$ and $\kappa$ as in the preceding item, 
\begin{align} \label{eq:gradterm estimate}
\left\|\frac{1- |v_{\e_n}|^2}{\e_n^2} - |\nabla v_*|^2 \right\|_{C^\kappa(K)} \leq C(\kappa, K) \e_n^2  \hspace{1cm} \mbox{ as } \e_n \to 0.
\end{align}
\item The limiting vortex locations $(b_1, \cdots, b_N) \in \call_{\wma}^\delta.$  
\item In fact, $v_{\e_n} \to v_*$ in $H^1(V)$ for any $V \subset \subset \bD \backslash \{b_1, \cdots, b_N\}.$ Consequently, using a standard diagonalization argument and using \eqref{eq:ckcvgs}-\eqref{eq:gradterm estimate},
\begin{align} \label{eq:limiting eq}
\Delta v_* + |\nabla v_*|^2 v_* &= 0, \hspace{1cm} \mbox{ in }\bD \backslash \{b_1,\cdots, b_N\}\\ \notag
v_* |_{\partial \bD} &= g_n.
\end{align}
Here is a different way to derive \eqref{eq:limiting eq}. Passing to the limit in $v_\e,$ we find that on $\bD \backslash \{b_1, \cdots, b_N\},$ we have 
\begin{align*}
\Delta v_* + |\nabla v_*|^2 v_* &= \omega_0 \left( k v_* + \frac{1}{m} (y^\perp \cdot \nabla v_*^\perp) \right). 
\end{align*}
Then, using the condition that $|v_*| = 1 $ on $\bD \backslash \{b_1,\cdots, b_N\},$ we compute that 
\begin{align*}
v_* \times \omega_0 \left( k v_* + \frac{1}{m} (y^\perp \cdot \nabla v_*^\perp) \right) &= \frac{\omega_0}{m} (v_*^{(1)},v_*^{(2)}) \times (y_1 \partial_2 - y_2 \partial_1)(-v_*^{(2)}, v_*^{(1)})\\ \notag
&= \frac{\omega_0}{m} (y_1 \partial_2 - y_2 \partial_1 )|v_*|^2\\ 
&= 0, 
\end{align*}
and so, 
$v_* \times \Delta v_* = 0. $ But the conditions $|v_*| = 1$ and $v_* \times \Delta v_* = 0$ easily imply that $v_*$ solves the harmonic map PDE, and consequently on $\bD \backslash \{b_1, \cdots, b_N\}$, 
$k v_* + \frac{1}{m} (y^\perp \cdot \nabla v_*^\perp)  = 0.
$
\end{itemize}
It now follows by H\'{e}lein's regularity theorem (cf. \cite{helein}) that $v_*$ is the canonical harmonic map associated with the points $\{(b_1,d_1),\cdots, (b_N,d_N)\}.$ We remark that this argument was also used in \cite{Struwe}, and earlier still, in \cite{kellerrubinsteinsternberg}. \par 
Finally, we show that $(b_1,d_1),\cdots, (b_N,d_N)$ is a critical point of $\calh^{n,\omega_0}.$ This will complete the proof of the theorem, since all critical points of $\calh^{n,\omega_0}$ in $\overline{\call}_{\wma}^\delta,$ lie on the critical circle $\call_{\wma}.$ This claim, will follow by an analogue of what is called the \textit{vanishing gradient property} in \cite{BBH}: we derive it using some Pohazaev-type identities, cf. \cite{BBH}, pp. 74. To this end, we know from the foregoing paragraph that near each $b_j,$ we may write $v_*(x) = e^{i(d_j \Theta_j + H_j)(x)},$ where $H_j$ is a real Harmonic function near (and including) $b_j,$ and $\Theta_j$ is the angle relative to polar coordinates with pole at $b_j.$ Moreover, by Theorem VIII.3 of \cite{BBH}, for \textit{any} $c \in (\bD^*)^M,$ where $M$ is some positive integer,
\begin{align}
\label{eq:derivative of renormalized energy}
DW(c) = 2\pi \left[ d_1 \left( - \frac{\partial H_1}{\partial x_2}(c_1), \frac{\partial H_1}{\partial x_1 }(c_1)\right),\cdots, d_M \left( - \frac{\partial H_M}{\partial x_2}(c_M), \frac{\partial H_M}{\partial x_1 }(c_M)\right)  \right].
\end{align}
In the present case, let $R > 0$ be sufficiently small so that $\overline{B_{2R}(b_1)}$ does not contain any other $b_j, j > 1,$ nor does it meet the boundary. We begin by taking the dot product of both sides of our elliptic PDE
\begin{align}
\label{eq:veps PDE}
\Delta v_\e + \frac{v_\e(1-|v_\e|^2)}{\e^2} = \omega_0 \left( k v_\e + \frac{1}{m} (y^\perp \cdot \nabla) v_\e^\perp\right)
\end{align}
with $\frac{\partial v_\e}{\partial y_1}.$  Integrating the result on $B_R(b_1),$ we find upon performing some integrations by parts, 
 \begin{align*}
 \int_{\partial B_R(b_1)} \frac{\partial v_\e}{\partial \nu} \cdot \frac{\partial v_\e}{\partial y_1} &- \frac{1}{2} \left| \nabla v_\e\right|^2(\nu \cdot e_1)- \int_{\partial B_R(b_1)} \frac{(1-|v_\e|^2)^2}{4\e^2} (\nu \cdot e_1) \\ &= \omega_0 \int_{\partial B_R(b_1)} \frac{k}{2} |v_\e|^2 (\nu \cdot e_1) - \frac{\omega_0}{m} \int_{B_R(b_1)} y_1 (Jv_\e). 
 \end{align*}
Passing to the limit $\e \to 0$ with $R$ fixed, we find using the Jacobian estimate, \cite[Theorem 2.1]{JerrardSoner}  for the last term that
\begin{align*}
\int_{\partial B_R(b_1)} \frac{\partial v_*}{\partial \nu} \cdot \frac{\partial v_*}{\partial y_1} - \frac{1}{2} |\nabla v_*|^2 (\nu \cdot e_1) &= \omega_0 \frac{k}{2}\int_{\partial B_R(b_1)} (\nu \cdot e_1) - \pi\frac{\omega_0}{m} d_1 (b_1)^{(1)} \\
&= -\pi\frac{\omega_0}{m} d_1 (b_1)^{(1)}
\end{align*}
In the above, $\nu$ denotes the unit normal vector to $\partial B_R(b_1).$ Let $\tau$ denote the unit tangent vector to $\partial B_R(b_1),$ chosen such that the pair $(\nu,\tau)$ preserves the orientation in $\bR^2.$ Then, arguing as in \cite{BBH} we find,
\begin{align*}
-\int_{\partial B_R(b_1)} \left( \frac{d_1}{R} \frac{\partial H_1}{\partial \tau} + \frac{1}{2} |\nabla H_1|^2 \right) (\nu \cdot e_1) + \int_{\partial B_R} \frac{\partial H_1}{\partial \nu} \cdot \left( d_1 \frac{\tau \cdot e_1}{R} + \frac{\partial H_1 }{\partial y_1} \right) = - \pi\frac{\omega_0}{m} d_1 (b_1)^{(1)}.
\end{align*}
On the other hand, since $\Delta H_1 = 0,$ multiplying by $\frac{\partial H_1}{\partial y_1}$ and integrating, we obtain 
\begin{align*}
- \int_{\partial B_R(b_1)} \frac{\partial H_1}{\partial \nu} \cdot \frac{\partial H_1}{\partial y_1}  + \frac{1}{2}\int_{\partial B_R(b_1)} |\nabla H_1|^2 (\nu \cdot e_1) = 0. 
\end{align*}
Adding these last two equations and dividing through by $d_1,$ we find 
\begin{align*}
-\frac{1}{R}\int_{\partial B_R} \frac{\partial H_1}{\partial \tau} (\nu \cdot e_1) + \frac{1}{R}\int_{\partial B_R} (\tau \cdot e_1) \frac{\partial H_1}{\partial \nu} = -\pi\frac{\omega_0}{m} (b_1)^{(1)}.
\end{align*}
By symmetry, a similar identity holds in the $e_2$ direction. Together, we find 
\begin{align}
- \frac{1}{R}\int_{\partial B_R(b_1)} \frac{\partial H_1}{\partial \tau} \nu + \frac{1}{R} \int_{\partial B_R(b_1)} \frac{\partial H_1}{\partial \nu} \tau = -\pi\frac{\omega_0 }{m} ((b_1)^{(1)}, (b_1)^{(2)}). 
\end{align}
Applying the mean value property to the harmonic functions $\frac{\partial H_1}{\partial \tau}, \frac{\partial H_1}{\partial \nu},$ we conclude that 
\begin{align}
2\pi(\nabla H_1(b_1))^{\perp} = -\pi\frac{\omega_0}{m} b_1. 
\end{align}
A similar identity holds at all $b_j, j = 1, \cdots, N.$ Putting these together with \eqref{eq:derivative of renormalized energy}, we find 
\begin{align}
D_{b_j}W(b_j) = 2\pi d_j (\nabla H_j(b_j))^\perp = -d_j\pi \frac{\omega_0}{m}b_j.
\end{align}
Dividing through by $\pi,$ we note from \eqref{eq:calh} that this says that $\nabla \calh^{n,\omega_0}(\mathbf{b},d) = 0.$  This completes the proof of the theorem. 
\end{proof}
\begin{remark}
\label{rem:modification} Assumption A (cf. Page 15) can be relaxed substantially: we were motivated about this relaxation by the remarks in \cite{LinAIHP}, pg. 618. Let $\wma \in (\bD^*)^{N_R}$ be a relative equilibrium. We may relax the Assumption A to the following: we assume that the real $2N_R \times 2N_R$ matrix $D^2 \calh^{n,\omega_0}(\wma)$ has at least one non-zero eigenvalue. Suppose that the number of $0$ eigenvalues of $D^2 \calh^{n,\omega_0}(\wma)$ is $Z,$ with $1 \leq Z < 2N_R.$ Next, consider the connected component of the level set of the functional $\calh^{n,\omega_0}$ containing the point $\wma$, i.e. 
\begin{align*}
\mathcal{M}_{\wma} := \{ \wmb \in (\bD^*)^{N_R}: \calh^{n,\omega_0}(\wmb) = \calh^{n,\omega_0}(\wma), \nabla \calh^{n,\omega_0}(\wmb) = 0\}. 
\end{align*}
Then the set $\mathcal{M}_{\wma}$ is clearly closed and bounded. Consequently, it is compact. Moreover, since the function $\calh^{n,\omega_0}$ is smooth in $(\bD^*)^{N_R},$ by the Rank Theorem, cf. \cite{Narasimhan} Theorem 1.3.14 or \cite{Boothby} Theorem 7.1, we conclude that $\mathcal{M}_{\wma}$ is a compact connected $Z-$dimensional smooth sub-manifold. We next argue that the rotational invariance of $\calh^{n,\omega_0}$ leads to $\mathcal{M}_{\wma}$ being foliated by simple smooth closed curves. 
\par Indeed, using the rotational invariance of the functional $\calh^{n,\omega_0},$ for any point $\wmp \in \mathcal{M}_{\wma},$ we find $e^{i\theta}\wmp \in \mathcal{M}_{\wma}.$ Consequently, as before, denoting $\call_{\wmp} := \{e^{i\theta} \wmp: \theta \in [0,2\pi) \},$ for any other point $\wmq \in \mathcal{M}_{\wma},$ if for some $\phi \in [0,2\pi)$ we have $e^{i\phi} \wmq \in \call_{\wmp},$ then necessarily $\wmq \in \call_{\wmp}.$ Hence, $\mathcal{M}_{\wma}$ is foliated by simple smooth curves of the form $\call_{\wmp},$ with $\wmp \in \mathcal{M}_{\wma}.$  
\par Given this set up, by the Tubular Neighborhood Theorem from Differential Topology  (cf. \cite{guilleminpollack}, Page 76), there exists a tubular neighborhood $\mathcal{M}_{\wma}^\delta$ with $\delta > 0$ small, which affords ortho-normal local coordinates similar to the one described for the set $\call_{\wma}^\delta$ in Step 1 of the proof of Theorem \ref{thm:existence}. Using these coordinates, we can once again write down a Taylor development of the function $\calh^{n,\omega_0}$ in the neighborhood $\mathcal{M}_{\wma}^\delta$ similar to \eqref{eq:hessian}. 
\par Now given a point $\wmp \in \mathcal{M}_{\wma},$ let $B_{\wmp} $ denote the set of points in $\mathcal{M}_{\wma}^\delta$ whose nearest point in $\mathcal{M}_{\wma}$ is $\wmp.$ By the Tubular Neighborhood Theorem, this set is well defined and lies in a plane normal to $\mathcal{M}_{\wma}$ at $\wmp,$ i.e. the orthogonal compliment in $\bR^{2N_R}$ of the tangent space $T_{\wmp} \mathcal{M}_{\wma}$ to $\mathcal{M}_{\wma}$ at $\wmp.$ We may use the eigenvectors of $D^2 \calh^{n,\omega_0}(\wmp)$ that are orthogonal to $T_{\wmp} \mathcal{M}_{\wma}$ to parametrize $B_{\wmp}.$ Then as in the proof of Theorem \ref{thm:existence}, we obtain the linking structure, using the positive and negative eigen-directions. It is then clear that the proofs of the two Theorems modify easily through obvious changes.  
\end{remark}
\begin{remark}[Quantifying convergence results] By way of comparing our results with those of \cite{jerrardspirn} and placing it in context, note that the periodic solutions to Gross-Pitaevskii that we have constructed follow point vortex dynamics for all time and for sufficiently small $\e > 0;$ however the statements of our results do not make this quantitative. The results of \cite{jerrardspirn} on the other hand are quantitative about the closeness of $Ju_\e(\cdot, t)$ to a sum of dirac masses, but only valid on time intervals on the order of $\ln \frac{1}{\e}.$ Combining our results with those of \cite{jerrardspirn}, one could obtain quantitative information for fixed positive but small $\e,$ valid for all time. 
\end{remark}
\begin{remark}[Applications of Theorems \ref{thm:existence} and \ref{thm:asymptotics}]
The relative equilibria to (PVF) that we have constructed in Section \ref{sec:PVF} imply that corresponding to each positive integer $k,$ our Theorems \ref{thm:existence}-\ref{thm:asymptotics} establish the existence of time-periodic solutions to Gross Pitaevskii equation \eqref{eq:GP}-\eqref{eq:bc} with $k-$fold symmetry, that follow relative equilibria to (PVF) having two rings, one containing $k$ vortices of degree $+1$ and another containing $k$ vortices of degree $-1.$ 
\end{remark}
\begin{appendices}
\section{Some Technical Lemmas}
In this appendix, we collect various technical results we need throughout the course of the paper. The first lemma is a maximum principle for solutions to \eqref{ansatzpde}. 
\begin{lemma} \label{lem:maximum princple}
Let $\omega \leq 0 $ and $k \geq 0$. Let $v_\e$ be a smooth solution to \eqref{ansatzpde}. Then for $\e > 0$ sufficiently small,
\begin{align} \label{eq:max principle}
|v_\e|^2 \leq 1 + \e^2 |\omega| k, 
\end{align}
and there exists a constant $C >0 $ independent of $\e >0,$ 
\begin{align} \label{eq:grad estimate}
|\nabla v_\e| \leq \frac{C}{\e}.
\end{align} 
\end{lemma} 
\begin{proof}
We first prove \eqref{eq:max principle}, whose proof follows from a  partial differential identity. For brevity we suppress dependence of $v$ on $\e.$ Set $u = |v|^2.$ Since $ \Delta u = 2 |\nabla v|^2 + 2 v \cdot \Delta v$, we calculate that 
\begin{align} 
\Delta u &= 2 \omega k u - 2 \frac{u(1-|v|^2)}{\e^2}  - \frac{\omega^2}{m^2}(y_1^2 + y_2^2)u + P,
\end{align}
where $P$ is positive:
\begin{align*}
P &= |\nabla v|^2 +\left( -\frac{\omega}{m} v_1 y_1 + \partial_{y_2} v_2\right)^2 \\ &+ \left( \frac{\omega}{m} y_2v_1 + \partial_{y_1} v_2\right)^2 + \left( -\frac{\omega}{m} v_2 y_1 + \partial_{y_2} v_1\right)^2 + \left( \frac{\omega}{m} v_2y_2 + \partial_{y_1} v_1\right)^2. 
\end{align*}
It follows that 
\begin{align} \label{eq:maxprincistep}
\Delta u(y) - u(y) \left(2\omega k  - 2\frac{(1-|v(y)|^2)}{\e^2} - \frac{\omega^2(y_1^2 + y_2^2)}{m^2} \right) \geq 0.
\end{align}
Note $u|_{\bdry} \equiv 1. $ Assume to the contrary that \eqref{eq:max principle} fails, so that there exists a point $\hat{y} \in \bD$ such that 
\begin{align} \label{eq:contradictionhyp}
|v_\e(\hat{y})|^2 = \max_{\overline{\bD}} |v_\e|^2 > 1 - \e^2 \omega k;
\end{align}
recall that we are assuming $\omega \leq 0.$ If $\hat{y} = (0,0),$ then, $\Delta u(0,0) \leq 0$ and we reach a contradiction to the inequality \eqref{eq:maxprincistep}. Suppose now that $\hat{y} \neq (0,0).$ In order to reach a contradiction, we will invoke a conformal map and we make the usual identification $y = (y_1, y_2) \in \bR^2 \sim \bC$ with the complex number $y = y_1 + i y_2.$ 
With this notational set up, we continue with \eqref{eq:contradictionhyp}, for $\hat{y} \neq (0,0).$ Since $u \equiv 1$ on $\partial \bD,$ we may assume $\hat{y} \in \bD.$ Let $\Lambda : \bD \to \bD$ be a conformal automorphism of the disc, i.e. a M\"{o}ebius transformation such that $\Lambda(\hat{y}) = (0,0).$ Consider the holomorphic change of variables $z := \Lambda(y).$ Then, define the function $U: \bD \to \bR$ using the formula
\begin{align*}
U(z) := u(y).
\end{align*} 
Recalling that $\Lambda$ is holomorphic, an easy calculation yields that $\Delta_y u(y) = |\nabla_y \Lambda (y)|^2 \Delta_z U(z).$ Since the conformality of $\Lambda$ prevents it from having vanishing derivative, we can re-write \eqref{eq:maxprincistep} at $z = \Lambda(y)$ as 
\begin{align*}
\frac{1}{|\nabla \Lambda(y)|^2 } \Delta_z U(z) - U(z) \left(2 \omega k - 2\frac{1 - U(z)}{\e^2} - \frac{\omega^2}{m^2}|z|^2 \right) \geq 0. 
\end{align*}
Naturally, $U|_{\partial \bD} = 1.$ Since $u$ has a maximum at $\hat{y},$ the function $U$ has a maximum at $\hat{z} = \Lambda(\hat{y}) = (0,0),$ reducing it to the earlier studied case. This then completes the proof of \eqref{eq:max principle} by the same argument as before. Having completed the proof of \eqref{eq:max principle}, the proof of \eqref{eq:grad estimate} now follows from elliptic estimates (see for instance \cite[Chapter 8]{GilbargTrudinger}). 
\end{proof}
\begin{remark}
The referee has pointed out to us that the preceding maximum principle also holds when $\omega k \geq 0,$ and also, $2 \omega k \geq \frac{\omega^2}{k^2}.$ In this case, one has that $|v_\e|\leq 1.$ 
\end{remark}
Our next Proposition is a construction which for a given collection of points $p = (p_1, \cdots, p_N)$ and associated degrees $d = (d_1,\cdots, d_N)$, constructs a function $w^\e_{p}: \bD \to \bD$ with $\scrj(w^\e_p)$ very nearly equal to $\scrj_0(p,d)$ and also has very close to minimal Ginzburg Landau energy $E_\e.$ 
\begin{proposition}
\label{prop:BBH construction} 
Let $N \geq 1$ be an integer and let $p = (p_1,\cdots, p_N) \in (\bD^*)^N$ and $d = (d_1,\cdots, d_N) \in \{\pm 1\}^N,$ with $\sum_{i=1}^N d_i = n.$  Then for $\e > 0 $ small, there exists a function $w^\e_p \in H^1_{g_n}(\bD, \bC) $ which satisfies the following. 
\begin{enumerate}
\item $w^\e_p(x) = 0$ if and only if $x = p_i.$ Outside the disjoint balls $(B_{2\e}(p_j))_{j=1}^N,$ one has that $|w^\e_p| = 1,$ and consequently $\deg(w^\e_p,p_i)$ is well defined.  We have $\deg(w^\e_p,p_i) = d_i.$ 
\item Let $\scrj_0(p,d)$ be as in \eqref{eq:fdmomentum}. Then 
\begin{align} \label{momentumoptimal}
\left|\scrj(w^\e_p) - \frac{\pi}{m}\scrj_0(p,d)\right| = o_\e(1)
\end{align}
as $\e \to 0.$ 
\item The Ginzburg Landau energy $E_\e(w^\e_p)$ satisfies the asymptotic development
\begin{align} \label{energyoptimal}
E_\e(w^\e_p) = N \left(\pi \log \frac{1}{\e} + \gamma\right) + W(p,d) + o_\e(1)
\end{align}
as $\e \to 0$ where $\gamma$ is a universal constant identified in \cite{BBH}. 
\item (Symmetry property) Let now $k$ be a divisor of $n$ and let $m = \frac{n}{k}.$ Suppose that the points $p$ and their associated degrees $d$ are preserved under rotation by angles $\frac{2\pi j}{k}, 0 \leq j \leq k-1.$ Then we may arrange that 
\begin{align}
\label{sym1}
w^\e_p(r,\frac{2\pi}{k} + \theta) &= w^\e_p(r,\theta), \hspace{1cm} r \in [0,1], \theta \in \bR.
\end{align}
\item Suppose that in addition to the assumptions in (4), we know that the points $p_i$ are arranged in concentric regular $k-$gons that are aligned in the sense that vortices in successive rings are arranged along the same rays emanating from the origin (cf. Figure 1). Then we may also arrange that 
\begin{align}
\label{sym2}
w^\e_p(r,\frac{\pi j}{k} + \theta) &= \overline{w^\e_p\left(r,\frac{\pi j}{k} - \theta\right)}, \hspace{.6cm} \mbox{ for odd } j,\mbox{ and } 0 \leq \theta \leq \frac{\pi}{k}.
\end{align}
\end{enumerate}
\end{proposition}
Since the proof of this Proposition follows the proof of Lemma 5.1 and Proposition 4.4 in \cite{GS} rather closely, we only make some comments regarding its proof. The starting point is the function $\Phi_0$ defined in \eqref{Phi0}-\eqref{eq:Phi0norm}. Using the Poisson integral formula corresponding to Neumann boundary conditions, we find 
\begin{align*}
\Phi_0(z) = \sum_{i=1}^N d_i \log|(b_i - z)(1 - \overline{b}_i z)|.
\end{align*}
Following Lemma 5.1 of \cite{GS}, one defines the phase function $\tilde{\chi}.$ The function $w^\e_p,$ is then defined to be $e^{i\tilde{\chi}}$ away from the vortices, and $\frac{\rho_j}{\e}e^{id_j \Theta_j}$ on vortex balls of radius $O(\e)$ centered $p_j,$ with $(\rho_j,\Theta_j)$ denoting polar coordinates at $p_j.$ The symmetry properties \eqref{sym1}-\eqref{sym2} arise from symmetry properties of the functions $\Phi_0$ and $\tilde{\chi}.$ The proof of the energy estimate \eqref{energyoptimal} proceeds exactly as in \cite{GS}. Here is a sketch of the proof of the momentum estimate \eqref{momentumoptimal}: Given points $p_1,\cdots, p_N$ as in the statement of Proposition \ref{prop:BBH construction}, consider concentric "rings" $R_i^\e$ centered at the origin, of inner and outer radii $|p_i| - 2\e$ and $|p_i| + 2\e$ respectively. Using the $\log$ bound on energy \eqref{energyoptimal}, and the Cauchy Schwarz inequality, it follows that the contribution of these rings to $\scrj(w^\e_p) = o_\e(1)$ as $\e \to 0.$ On the other hand, in between rings $|w^\e_p| \equiv 1,$ while $w_\e^p \cdot (y^\perp \cdot \nabla (w_\e^p)^\perp) = \frac{\partial \tilde{\chi}}{\partial \theta},$ where $\theta$ denotes polar angle relative to the usual polar coordinates. The desired estimate follows by noting that if $\mathcal{C}$ is any circle centered at the origin, and contained in $\bD \backslash \cup_{i=1}^N R^\e_i$ then $\int_\mathcal{C} \frac{\partial \tilde{\chi}}{\partial \theta} = 2 \pi d_C$ where $d_C$ is the total degree of vortices contained inside the circle $C.$ One then computes using $\sum d_i = n = km$ that
\begin{align*}
\scrj(w^\e_p) &= -\frac{1}{2} \big\{ k \pi - \frac{\pi}{m}\big(  d_1 (|p_2|^2 - |p_1|^2) + (d_1 + d_2)(|p_3|^2 - |p_2|^2) + \cdots \\  &+ (d_1 + \cdots + d_{N-1})(|p_N|^2 - |p_{N-1}|^2) + (d_1 + \cdots + d_N) (1 - |p_N|^2)\big) \big\} +o_\e(1)\\
\Rightarrow \scrj(w^\e_p)&=  -\frac{\pi}{2} \left\{ k - \frac{n}{m} + \frac{
1}{m}\big(d_1 |p_1|^2 + \cdots + d_N |p_N|^2 \big)  \right\} + o_\e(1)\\
&=  \frac{\pi}{m}\scrj_0(p,d) + o_\e(1).
\end{align*}
\par 
Some remarks concerning this proposition are in order. First, fixing $N, d_1, \cdots, d_N$ the map $p \mapsto w^\e_p: (\bD^*)^{\bN} \to H^1_{g_n}(\bD,\bC)$ is continuous, where the domain is equipped with the standard Euclidean topology, and the range is equipped with the Sobolev norm topology. Second, we will use this construction in both our approaches: in the first as an admissible competitor, while in the second as initial datum to a heat flow that we will consider there. 
\par
We next state a modification of the Projection Lemma in  \cite{LinMinMax}. It serves to construct an approximate inverse to the map $p \mapsto w^\e_p$ obtained in the preceding proposition; given a function in $u \in H^1_g,$ under certain assumptions on the Ginzburg-Landau energy of $u,$ the projection lemma identifies its essential zeroes, which serve as vortices of the function $u$ upto small errors in $\e.$ 
\par 
Suppose $n,N$ are as in Proposition \ref{prop:BBH construction}. Naturally, we have $n \leq N,$ but typically $N$ can be larger than $n$ and the Projection Lemma as stated in \cite{LinMinMax} cannot be expected to hold as is. Indeed, given $u \in H^1_{g_n}(\bD, \bC),$ we have the easy estimate 
\begin{align*}
E_\e(u) \geq \min_{\mathfrak{u} \in H^1_{g_n}} E_\e(\mathfrak{u}) \geq n \pi \log \frac{1}{\e}  - C,
\end{align*}
where the last inequality follows from \cite{BBH} and $C$ is independent of $\e.$ Suppose in addition that $u$ satisfies the estimate 
\begin{align*}
E_\e(u) \leq N \pi \log \frac{1}{\e} + K. 
\end{align*}
Clearly, there exists exactly one integer $m \in [n,N]$ such that $u$ belongs the energy band
\begin{align*}
u \in \mathcal{S}_m := \left\{ u \in H^1_{g_n}(\bD,\bC): \left(m - \frac{1}{2}\right) \leq \frac{E_\e(u)}{\pi \log \frac{1}{\e}} < \left( m + \frac{1}{2} \right)  \right\}. 
\end{align*}
The next Lemma deals with functions that belong to the class $\mathcal{S}_N$ that in addition satisfy the following assumptions:
\begin{enumerate}[label=(\subscript{A}{\arabic*})]
\item $|u(x)| \leq 2$ on $\bD$ and
\item there exists $c_0 > 0$ such that for any $x_0 \in \{x \in \bD, |u(x)| \leq \frac{1}{2} \} $ the set $B_{c_0 \e}(x_0) \subset \{|u(x)| \leq \frac{3}{4} \}. $ 
\end{enumerate}
\begin{proposition}[Lemma 1.3 \cite{LinMinMax}] \label{lem: proj lemma}
Let $D$ be a compact topological submanifold with boundary in $\bR^{2N},$ and let $h:D \to H^1_{g_n}(\bD,\bC)$ be a continuous map such that $h(D) \subset S_N,$ and in addition, elements of $h(D)$ satisfy assumptions $(A_1)$ and $(A_2)$ above. Then there exist positive numbers $\lambda = \lambda(c_0,K,C, n,N), \e_0 $ and $\alpha_0$ and a continuous map $\Pi:h(D) \to W_\lambda := \{ b \in (\bD^*)^N : W(b,d) \leq \lambda\}$ when $\e \in (0,\e_0).$ Furthermore, for any $p \in D,\Pi(h(p))$ is at most $4\e^{\alpha_0}$ away from the essential zeroes of $h(p).$ 
\end{proposition}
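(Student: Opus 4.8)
The plan is to follow the proof of Lemma 1.3 in \cite{LinMinMax}, adjusting the bookkeeping to accommodate $N$ vortices of degree $\pm 1$ (rather than a prescribed number of degree $+1$ vortices) and the fact that in general $N>n$. Fix a small $\alpha_0\in(0,\tfrac14)$, to be chosen at the end. The map $\Pi$ will send $u\in h(D)\subset S_N$ to the (suitably labeled) $N$-tuple of its essential zeros at scale $\e^{\alpha_0}$; there are then three things to verify: that this tuple lies in $W_\lambda$, that the assignment is continuous, and that it is $4\e^{\alpha_0}$-close to the essential zeros of $u$. The last point is automatic once the first is in place, since by construction the components of $\Pi(u)$ are the centers of the vortex balls and the essential zeros lie inside those balls, whose radii sum to $\e^{\alpha_0}$.

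\emph{Extracting the essential zeros.} Given $u\in h(D)$, membership in $S_N$ gives $E_\e(u)\le(N+\tfrac12)\pi\log\tfrac1\e$, so for $\e$ small $E_\e(u)\le\e^{\alpha-1}$ for any $\alpha<1$ and Theorem \ref{thm:vortexballs} applies with $r=\e^{\alpha_0}$. It produces disjoint closed balls $\{B_i\}$ of total radius $\e^{\alpha_0}$ covering $\{\,\bigl||u|-1\bigr|\ge\e^{\alpha/4}\}\cap\{\dist(\cdot,\bdry)>\e\}$, with degrees $d_i$ and $D:=\sum_i|d_i|$. Feeding the band bound into \eqref{eq:vortexballs} yields $\pi D(1-\alpha_0)\log\tfrac1\e-C_D\le E_\e(u)<(N+\tfrac12)\pi\log\tfrac1\e$, so $D\le N$ for $\e$ and $\alpha_0$ small. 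For the reverse inequality I would use the ball-growth / Jerrard--Soner lower bound, which (once the $B_i$ are pairwise $\e^{\alpha_0}$-separated) gives $E_\e(u)\ge \pi D\log\tfrac1\e + \pi N\gamma + W(b,d) - o_\e(1)$ with $\gamma$ the constant of \cite{BBH}: if $D\le N-1$ this forces $E_\e(u)<(N-\tfrac12)\pi\log\tfrac1\e$, contradicting $u\in S_N$. So $D=N$. Assumptions $(A_1)$ and $(A_2)$ are exactly what guarantee that this collection genuinely records $N$ zeros of degree $\pm1$ and not, say, one ball of higher degree or a spurious small zero set: $(A_2)$ places every zero at the center of a definite-size region where $|u|\le\tfrac34$, so zeros are $\Omega(\e)$-separated and each small ball carries degree $\pm1$; since $\sum d_i=n$ and $d\in\{\pm1\}^N$, the degree vector must coincide with $d$ up to relabeling. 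Define $\Pi(u)=(a_1,\dots,a_N)$ to be the tuple of centers, ordered by a fixed convention as in \cite{LinMinMax} (the precise convention is immaterial, provided one perturbs off the measure-zero set where it is ambiguous so as to keep it locally constant).

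\emph{Range and continuity.} Since the $a_i$ are distinct (indeed $\e^{\alpha_0}$-separated), $\Pi(u)\in(\puncD)^N$. The Jacobian estimate, Theorem \ref{thm:Jacobian Estimate} with $W^{-1,1}$ replacing the compactly supported Lipschitz dual as remarked after it, gives $\|Ju-\pi\sum_i d_i\delta_{a_i}\|_{W^{-1,1}}\le C\e^{\alpha_0}(1+E_\e(u))=o_\e(1)$, so $Ju$ concentrates on $\Pi(u)$ with the correct masses. Combining the upper bound $E_\e(u)\le N\pi\log\tfrac1\e+K$ with the lower bound from the previous step gives $W(\Pi(u),d)\le K-\pi N\gamma+o_\e(1)$; taking $\lambda:=K-\pi N\gamma+1$ and $\e_0$ small enough that the error is $<1$ fixes the dependence $\lambda=\lambda(c_0,K,C,n,N)$. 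For continuity, suppose $u_j\to u$ in $H^1_{g_n}$; by $(A_1)$ all functions are bounded by $2$, so $j(u_j)=u_j\times\nabla u_j\to j(u)$ in $L^1(\bD)$, hence $Ju_j\to Ju$ in $W^{-1,1}$. At a fixed $\e$ the essential zeros of $u$ are $\Omega(\e^{\alpha_0})$-separated, so the convergence of $Ju_j$ to $Ju$ forces the essential zeros of $u_j$ to lie within $O(\e^{\alpha/4})$ of those of $u$ for $j$ large, in particular in the same ordering class; thus $\Pi(u_j)\to\Pi(u)$. As $h$ is a continuous map from a compact space onto its image, $\Pi$ descends to a continuous map on $h(D)$.

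\emph{Main obstacle.} The crux is the count in Step 1: showing that $u\in S_N$ (the \emph{top} band) forces exactly $N$ essential zeros, each of degree $\pm1$, placed so that $W$ stays bounded above. Concretely one must see that no energy is "wasted'', that the energy ceiling keeps same-sign vortices apart and keeps every vortex away from $\bdry$ (so the positive terms $-\pi d_j^2\log|1-\overline b_j b_k|$ and $-\pi d_jd_k\log|b_j-b_k|$ in \eqref{renormn} cannot blow up), and that no pair of same-sign zeros has merged — while opposite-sign pairs, which only drive $W$ downward, are harmless. This is precisely where $h(D)\subset S_N$ and the pointwise hypotheses $(A_1)$--$(A_2)$ enter, through the ball-growth lower bound together with the Jacobian estimate. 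A secondary, purely combinatorial difficulty is the choice of labeling that makes $\Pi$ genuinely continuous rather than continuous only up to permutation; I would handle this exactly as in \cite{LinMinMax}, working on $(\puncD)^N$ and perturbing off the ambiguous set. Everything else is soft once the quantization is in hand.
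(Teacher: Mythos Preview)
The paper does not actually prove this proposition: it states it and then writes that the proof is omitted because it ``follows the proof of Lemma~1.3 of \cite{LinMinMax} rather closely.'' So there is no argument in the paper to compare against, and your overall strategy---extract essential zeros via the vortex-balls construction, count them, bound $W$ from above using the energy ceiling, and obtain continuity through the Jacobian---is indeed the strategy of \cite{LinMinMax}.

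That said, your argument for the reverse inequality $D\ge N$ contains a genuine logical error. You write that the Jerrard--Soner lower bound gives $E_\e(u)\ge \pi D\log\tfrac1\e + \cdots$, and that ``if $D\le N-1$ this forces $E_\e(u)<(N-\tfrac12)\pi\log\tfrac1\e$.'' But a \emph{lower} bound on $E_\e(u)$ cannot force an \emph{upper} bound: if $D\le N-1$, the vortex-balls inequality only tells you $E_\e(u)\ge (N-1)\pi\log\tfrac1\e - C$, which is entirely compatible with $u\in S_N$. What you actually need at this step is an \emph{upper} bound of the form $E_\e(u)\le \pi D\log\tfrac1\e + O(1)$, and that does not follow from $(A_1)$, $(A_2)$ and membership in $S_N$ alone---a function with few essential zeros can still carry arbitrarily large Dirichlet energy in its phase away from the vortex balls. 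In \cite{LinMinMax} this step uses more: one compares $u$ with a canonical competitor (harmonic map outside the balls, agreeing with $u$ on the boundaries of the balls), and controls the excess by the Struwe-type bad-circle argument together with $(A_2)$; alternatively, in the applications here the functions in $h(D)$ arise from the heat flow of well-prepared data, for which the phase energy outside the cores is already under control. You correctly flag this count as the ``main obstacle,'' but the sentence you offer for it does not close the gap. The remaining ingredients---$D\le N$ from the energy ceiling, continuity via $Ju_j\to Ju$ in $W^{-1,1}$, the labeling convention, and the bound $W(\Pi(u),d)\le\lambda$---are along the right lines.
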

We omit the proof of this Lemma since it follows Lemma 1.3 of \cite{LinMinMax} rather closely. 
\end{appendices}

\end{document}